\documentclass[12pt]{amsart}
\oddsidemargin 3ex
\evensidemargin 3ex
\textheight 7.9in
\textwidth 6.0in

\usepackage{mathrsfs}

\font\bbbld=msbm10 scaled\magstephalf

\usepackage{hyperref}


\usepackage{hyperref}
\hypersetup{colorlinks, urlcolor = blue}

\usepackage{graphicx}

\usepackage{xcolor}

\newcommand{\bi}{\bar{i}}

\newcommand{\bpartial}{\bar{\partial}}

\def \a{\alpha}

\def \p{\partial}
\def \f{\frac}

\newcommand{\fRe}{\mathfrak{Re}}

\newcommand{\bfR}{\hbox{\bbbld R}}

\newcommand{\tr}{\mbox{tr}}

\newcommand{\ol}{\overline}
\newcommand{\ul}{\underline}

\newtheorem{theorem}{Theorem}[section]
\newtheorem{lemma}[theorem]{Lemma}
\newtheorem{proposition}[theorem]{Proposition}

 \theoremstyle{definition}

\theoremstyle{remark}

\numberwithin{equation}{section}

\begin{document}
\setlength{\baselineskip}{1.2\baselineskip}

\title[Parabolic Complex Monge-Amp\`ere Type Equation]
{Parabolic Complex Monge-Amp\`ere Type Equations on Closed Hermitian Manifolds}

\author{Wei Sun}

\address{Department of Mathematics, Ohio State University,
         Columbus, OH 43210}
\email{sun@math.ohio-state.edu}

\begin{abstract}
We study the parabolic complex Monge-Amp\`ere type equations on closed Hermitian manfolds. We derive uniform $C^\infty$ {\em a priori} estimates for normalized solutions, and then prove the $C^\infty$ convergence. The result also yields a way to carry out method of continuity for elliptic Monge-Amp\'ere type equations.

\end{abstract}

\maketitle

\bigskip

\section{Introduction}
\label{pmach-int}

Let $(M^n, \omega)$ be a compact Hermitian manifold of complex dimension $n\geq 2$ and $\chi$ a smooth real $(1,1)$ form on $M^n$. Write $\omega$ and $\chi$ respectively as
\[
\omega = \frac{\sqrt{-1}}{2} \sum_{i,j} g_{i\bar j} dz^i \wedge d\bar z^j,
\]
and
\[
\chi = \frac{\sqrt{-1}}{2} \sum_{i,j} \chi_{i\bar j} dz^i \wedge d\bar z^j.
\]
We denote $\chi_u : = \chi + \frac{\sqrt{-1}}{2} \p\bpartial u$, and also set 
\[ 
[\chi] := \big\{\chi_u:  \, u \in C^2 (M)\big\}, \quad [\chi]^+ := \big\{\chi' \in [\chi]: \chi' > 0\}. 
\]

In this paper, we are concerned with the following equation on $M$,
\begin{equation}
\label{pmach-parabolic-equation}
	\frac{\p u}{\p t} = \ln  \frac{\chi_u^n }{\chi_u^{n - \a} \wedge \omega^\a} - \ln \psi 
\end{equation}
with initial value $u(x , 0) = 0$. We are interested in solving the corresponding nondegenerate parabolic equations.  To be parabolic,  equation~\eqref{pmach-parabolic-equation} must have $\chi_{u} > 0$; we call such functions {\em admissible} or $\chi$-{\em plurisubharmonic}. To be nondegenerate, we need to assume $\psi > 0$ on $M$.

In order to solve equation~{pmach-parabolic-equation}, it is necessary to impose some condition. As in \cite{GSun12} and \cite{Sun2013e}, we define,  for a smooth positive real function $\psi$ on $M$,
\begin{equation}
\label{pmach-definition-cone-condition}
\mathscr{C}_\a (\psi) := \{[\chi]: \exists \chi' \in [\chi]^+, n\chi'^{n-1} > (n - \a)\psi \chi'^{n - \a - 1} \wedge \omega^\a\}.
\end{equation}
If $\chi \in \mathscr{C}_\a (\psi)$, we say that $\chi$ satisfies the cone condition.

The study of the parabolic flows is motivated by the complex Monge-Am\`ere type equation
\begin{equation}
\label{pmach-elliptic-equation}
 \chi_u^n =  \psi \chi_u^{n-\alpha} \wedge \omega^{\alpha}, \; \chi_u > 0 .
\end{equation}
A critical point of the flow gives a Hermitian metric $\chi$ satisfying
\begin{equation}
\label{pmach-elliptic-equation-actual}
 \chi_u^n =  e^b \psi \chi_u^{n-\alpha} \wedge \omega^{\alpha}, \; \chi_u > 0
\end{equation}
for some real constant $b$. 

When $\alpha = n$, it is exactly the complex Monge-Amp\`ere equation, which is strongly connected with complex geometry. In the fundamental work of Yau~\cite{Yau78} (see also \cite{Aubin78}), he proved the Calabi conjecture~\cite{Calabi56},~~\cite{Calabi57} by solving the complex Monge-Amp\`ere equation. Cao~\cite{Cao85} reproduced the result of Yau~\cite{Yau78} and Aubin~\cite{Aubin78} by K\"ahler--Ricci flow. Cherrier~\cite{Cherrier87}, Tosatti and Weinkove~\cite{TWv10a} independently extended the zero order estimate of Yau~\cite{Yau78} to Hermitian manifolds under the balanced condition, and then solved the complex Monge-Amp\`ere equations  on closed Hermitian manifolds by method of continuity. Later, Tosatti and Weinkove~\cite{TWv10b} successfully removed the balanced condition and extended the result to general Hermitian manifolds. Gill~\cite{Gill11} introduced the Chern--Ricci flow, and gave a parabolic proof for the result in \cite{TWv10b}.  

For $\alpha = 1$, equation~\eqref{pmach-elliptic-equation} was proposed by Donaldson~\cite{Donaldson99a} in connection with moment maps and is closely related to the Mabuchi energy \cite{Chen04}, \cite{Weinkove06}, \cite{SW08} for K\"ahler manifolds. It is well known that when $\chi$ and $\omega$ are K\"ahler, there is an invariant defined by
\begin{equation}
\label{pmach-kahler-constant}
 c = \frac{\int_M \chi^n}{\int_M \chi^{n - \a} \wedge \omega^\a}.
\end{equation}  
The equation was studied by Chen~\cite{Chen00b},  \cite{Chen04}, Weinkove~\cite{Weinkove04}, \cite{Weinkove06}, Song and Weinkove~\cite{SW08} using the $J$--flow where $\psi = c$. Their result was extended by Fang, Lai and Ma~\cite{FLM11} to all $1 \leq \alpha < n$. In \cite{FL12}, \cite{FL13}, Fang and Lai studied a class of geometric flows when $\psi = c$, which include  equation~\eqref{pmach-parabolic-equation}. It attracts our attention to generalize the results to general Hermitian manifolds or general $\psi$. Guan and the author~\cite{GSun12} studied the Dirichlet problem on general Hermitian manifolds with the assumption of subsolution. The author~\cite{Sun2013e} solved equation~\eqref{pmach-elliptic-equation} by method of continuity on closed Hermitian manifolds under the cone condition.


In the study, the sharp $C^2$ estimate is the key. We prove the following theorem.
\begin{theorem}
\label{pmach-int-thm-main}
Let $(M^n , \omega)$ be a closed Hermitian manifold of complex dimension $n$. Suppose that $\chi$ is a smooth real $(1,1)$ form satisfying $\chi \in \mathscr{C}_\a (\psi)$. Then there exists a long time solution $u$ to equation~\eqref{pmach-parabolic-equation}. Moreover, there are constants $C$ and $A$ such that
\begin{equation}
\label{pmach-int-thm-C2}
 \Delta u + \tr \chi \leq C e^{A (u - \inf_{M\times [0,t]} u)},
\end{equation}
where $C$, $A$ depend only on initial geometric data.
\end{theorem}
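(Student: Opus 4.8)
The plan is to prove short-time existence by standard parabolic theory, reduce long-time existence to a priori $C^0$--$C^2$ bounds on finite time intervals, and extract~\eqref{pmach-int-thm-C2} from the maximum principle applied to a suitable test function; the sharp estimate~\eqref{pmach-int-thm-C2} is at the same time the crucial ingredient for global existence. Since $\chi\in\mathscr{C}_\a(\psi)$ forces $[\chi]^+\neq\emptyset$, we may run the flow from an admissible (positive) initial metric, so that equation~\eqref{pmach-parabolic-equation} reads $\p_t u=\cF(\chi_u)-\ln\psi$ with $\cF(\chi_u)=\ln\big(\chi_u^n/(\chi_u^{n-\a}\wedge\o^\a)\big)$ on $\chi$-plurisubharmonic functions; its linearization $\p_t-\sum_{i,j}\cF^{i\bar j}\nabla_i\nabla_{\bar j}$ has $\cF^{i\bar j}=\p\cF/\p(\chi_u)_{i\bar j}>0$ and $\cF$ is concave in $(\chi_u)_{i\bar j}$, so standard theory gives a unique smooth solution on a maximal interval $[0,T_{\max})$. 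Differentiating in $t$, the function $\p_t u$ solves a linear homogeneous parabolic equation, so the maximum principle gives $|\p_t u|\le C$; hence $\chi_u^n/(\chi_u^{n-\a}\wedge\o^\a)$ is pinched between two positive constants for all time, and $\|u(\cdot,t)\|_{C^0}\le C(1+t)$. Moreover the cone condition yields a uniform oscillation bound $\mathrm{osc}_M u(\cdot,t)\le C$ via comparison with a representative $\chi'\in[\chi]^+$, as in \cite{Sun2013e, GSun12}.

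For~\eqref{pmach-int-thm-C2}, fix $t$ and consider on $M\times[0,t]$
\[
 W \;=\; \ln\l_1 \,+\, \v\big(|\nabla u|^2_\o\big) \,-\, A\big(u-\inf_{M\times[0,t]}u\big),
\]
where $\l_1$ is the largest eigenvalue of $\chi_u$ relative to $\o$, $\v(s)=-\tfrac12\ln(1-s/2K)$ with $K=\sup_{M\times[0,t]}|\nabla u|^2_\o+1$ (so $0\le\v\le\tfrac12\ln 2$), and $A$ is a large constant to be fixed in terms of the initial data. If $W$ is maximized at time $0$ it is bounded by the initial data; otherwise, at the maximum point $(x_0,t_0)$ with $t_0>0$, choose coordinates with $g_{i\bar j}=\d_{ij}$ and $(\chi_u)_{i\bar j}$ diagonal there (perturbing $\l_1$ to a smooth quantity in the standard way), so that $0\le(\p_t-\sum_p\cF^{p\bar p}\nabla_p\nabla_{\bar p})W$ at $(x_0,t_0)$. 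Differentiating~\eqref{pmach-parabolic-equation} twice, the right-hand side splits into: \emph{good} negative terms --- the concavity term $\sum\cF^{i\bar j,k\bar l}\nabla_1(\chi_u)_{i\bar j}\nabla_{\bar 1}(\chi_u)_{k\bar l}\le 0$, the eigenvalue-perturbation terms $-\l_1^{-1}\sum_p\cF^{p\bar p}\sum_{m>1}(\l_1-\l_m)^{-1}(|\nabla_p(\chi_u)_{1\bar m}|^2+|\nabla_p(\chi_u)_{m\bar 1}|^2)$, the term $-\l_1^{-1}\sum_p\cF^{p\bar p}|\nabla_p(\chi_u)_{1\bar 1}|^2$, and the $\v$-term controlling $\v'\sum_p\cF^{p\bar p}(|\nabla_p\nabla u|^2+|\nabla_{\bar p}\nabla u|^2)$; \emph{bad} third-order terms, schematically $\l_1^{-1}\sum_p\cF^{p\bar p}\,\fRe\big(T\,\nabla(\chi_u)\big)$, produced by commuting covariant derivatives on the non-K\"ahler manifold ($T$ the torsion of $\o$); from $-Au$, the contribution $-A\sum_p\cF^{p\bar p}\chi_{p\bar p}+A\a-A\p_t u\le-A\e\sum_p\cF^{p\bar p}+C$, using $\sum_p\cF^{p\bar p}(\chi_u)_{p\bar p}=\a$ and $\chi>0$; and curvature and lower-order terms bounded, after dividing by $\l_1$, by $C(1+\sum_p\cF^{p\bar p})$.

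The crux --- and the step I expect to be the main obstacle --- is absorbing the bad torsion terms. First, $\nabla W=0$ at $(x_0,t_0)$ expresses $\l_1^{-1}\nabla_p(\chi_u)_{1\bar 1}$ in terms of $\nabla u$, removing the bad terms carrying the index $1\bar 1$, at the cost of terms $\le CK\sum_p\cF^{p\bar p}$ absorbed by enlarging $A$; the remaining bad terms involve $\nabla_p(\chi_u)_{m\bar 1}$ with $m>1$, and following Hou--Ma--Wu and Tosatti--Weinkove one splits the indices $p$ according to whether $\l_p\ge\d\l_1$, absorbing the first group into the eigenvalue-perturbation and concavity good terms by Cauchy--Schwarz, and the second group into the concavity and $\v$-terms, where $\v'\ge c(K)>0$ and a careful treatment of the regime $\l_n\ll\l_1$ are essential. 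Once the bad terms are absorbed, for $A$ large the surviving positive terms --- built from $-A\e\sum_p\cF^{p\bar p}$ together with elementary inequalities on the symmetric functions of $\l$ using the pinching of $\chi_u^n/(\chi_u^{n-\a}\wedge\o^\a)$ --- dominate when $\l_1$ is large, forcing $\l_1(x_0,t_0)\le C$; hence $W(x_0,t_0)\le\ln C+\tfrac12\ln 2$ since $(u-\inf u)(x_0,t_0)\ge 0$, so $W\le C$ on $M\times[0,t]$; therefore
\[
 \Delta u+\tr\chi \;=\; \tr_\o\chi_u \;\le\; n\l_1 \;\le\; C\,e^{A(u-\inf_{M\times[0,t]}u)}.
\]
The dependence on $K$ is eliminated by combining this with a gradient estimate: the uniform oscillation bound gives $\sup_{M\times[0,t]}|\nabla u|^2_\o\le C$ (via the maximum principle applied to $\log|\nabla u|^2_\o-A(u-\inf u)$, or via the standard interplay with the second-order bound just obtained), so $K\le C$ and $C,A$ in~\eqref{pmach-int-thm-C2} depend only on the initial data.

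Finally, on any $[0,T]$ with $T<T_{\max}$, the bounds $\|u\|_{C^0}\le C(T)$, $|\p_t u|\le C$ and $\Delta u\le C(T)$ force the eigenvalues of $\chi_u$ relative to $\o$ to lie between two positive constants, so~\eqref{pmach-parabolic-equation} is uniformly parabolic with right-hand side concave in $\nabla^2 u$ and smooth in $(x,t)$; the parabolic Evans--Krylov theorem gives interior $C^{2,\g}$ estimates, and differentiating the equation and bootstrapping with parabolic Schauder estimates gives uniform $C^\infty$ bounds on $[0,T-\d]$. Hence the solution extends past $T$, and since $T<T_{\max}$ was arbitrary, $T_{\max}=\infty$.
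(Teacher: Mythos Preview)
Your approach differs from the paper's in a way that creates a genuine gap, not merely a stylistic difference.

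The paper does \emph{not} use the Hou--Ma--Wu test function $\ln\l_1+\v(|\nabla u|^2)-A(u-\inf u)$. It works with $w=\tr_\o\chi_u$ and the test quantity $e^\phi w$ where
\[
\phi=-A(u-\ul u)+\frac{1}{u-\ul u-\inf_{M_t}(u-\ul u)+1},
\]
the second summand being a Phong--Sturm correction that replaces your gradient term. The crucial point is the appearance of $\ul u$, the potential for which $\chi_{\ul u}$ satisfies the \emph{cone} inequality $n\chi_{\ul u}^{n-1}>(n-\a)\psi\,\chi_{\ul u}^{n-\a-1}\wedge\o^\a$. After applying the heat operator, the paper's good term is $A\,S_\a^{-1}\sum_i S_{\a-1;i}(X^{i\bar i})^2(\chi_{\ul u})_{i\bar i}$, and Theorem~\ref{pmach-theorem-alternative} (the FLM alternative) converts the cone condition into the lower bound
\[
S_\a^{-1}\sum_i S_{\a-1;i}(X^{i\bar i})^2(\chi_{\ul u})_{i\bar i}\ \ge\ (1+\theta)\,\a\Big(\frac{\psi S_\a}{C^\a_n}\Big)^{1/\a}
\]
when $w$ is large. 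Combined with $-\p_tu=\ln(\psi S_\a/C^\a_n)$ and $\ln x\le x-1$, this yields a net good term $\ge \theta\a(\psi S_\a/C^\a_n)^{1/\a}>0$, which is what closes the estimate.

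Your good term, by contrast, is $-A\sum_p\cF^{p\bar p}\chi_{p\bar p}+A\a$, relying only on $\chi>0$. For $\a<n$ this cannot work: unlike the Monge--Amp\`ere case, $\sum_p\cF^{p\bar p}=S_\a^{-1}\sum_pS_{\a-1;p}(X^{p\bar p})^2$ remains \emph{bounded} as $\l_1\to\infty$ (since $\p_tu$ pins $S_\a(X^{-1})$ between two positive constants and none of the $X^{p\bar p}$ need blow up). Hence $-A\e\sum_p\cF^{p\bar p}+A\a$ need not even be negative, and it certainly cannot dominate the curvature terms of order $w\sum_p\cF^{p\bar p}$. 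This is not a technicality: for the $J$-flow ($\a=1$) it is known that positivity of $\chi$ without the cone condition does not yield a $C^2$ bound. You must build $\ul u$ into the test function and invoke the alternative.

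A second, related issue: your constants depend on $K=\sup|\nabla u|^2+1$ through $\v$, and you propose to remove this via a uniform oscillation bound ``as in \cite{Sun2013e,GSun12}''. In the present parabolic setting the oscillation bound is \emph{downstream} of the $C^2$ estimate (see Section~\ref{pmach-long}), so this is circular. The paper sidesteps the problem entirely: the Phong--Sturm term $E_1=1/(u-\ul u-\inf+1)$ produces the third-order control that your $\v$-term provides, without any gradient dependence, so $C$ and $A$ depend only on the initial data from the outset.
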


The $C^2$ estimate is much more improved than that in \cite{Sun2013p}, which is
\begin{equation}
	\Delta u + tr\chi \leq C e^{\left( e^{A(\sup_{M\times[0,t]} (u - \ul u) - \inf_{M\times[0,t]} (u - \ul u)) - e^{A (\sup_{M\times[0,t]} (u - \ul u) - (u-\ul u))}}\right)} .
\end{equation}
The improved $C^2$ estimate can help us to obtain the uniform {\em a priori} $C^0$ estimate if we have proper conditions. The $C^0$ is one of the most difficult estimates for differential equations on closed manifolds. In fact, the higher order estimates thus follow from the $C^0$ and $C^2$ estimates, Evans-Krylov theory~\cite{Evans82},~\cite{Krylov82} and Schauder estimates, which are quite standard procedures.

We do not impose strong condition on $\psi$, and consequently it is very likely that the flow $u$ itself does not converge. To discover some convergence property, it is necessary to normalize the solution. Let
\begin{equation}
\label{pmach-int-definition-normalization}
	\tilde u = u - \frac{\int_M u \omega^n}{\int \omega^n} .
\end{equation}

For general Hermitian manifolds, we have the following result.
\begin{theorem}
\label{pmach-int-theorem-convergence-Hermitian}
Under the assumption of Theorem~\ref{pmach-int-thm-main}, there exists a uniform constant $C$ such that for all time $t \geq 0$,
\begin{equation}
	\sup_{x\in M} u(x,t) - \inf_{x\in M} u(x,t) < C,
\end{equation}
given that
\begin{equation}
    \frac{\chi^n}{\chi^{n - \a} \wedge \omega^\a}\leq \psi .
\end{equation}
Then $\tilde u$ is $C^\infty$ convergent to a smooth function $\tilde u_\infty$. Moreover, there is a unique real number $b$ such that the pair $(\tilde u_\infty , b)$ solves equation~\eqref{pmach-elliptic-equation-actual}.
\end{theorem}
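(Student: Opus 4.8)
The plan is, in order: extract monotonicity of the flow and a one-sided $C^0$ bound from the extra hypothesis; upgrade this to a uniform oscillation bound using the sharp estimate of Theorem~\ref{pmach-int-thm-main}; bootstrap to uniform higher-order bounds; and then pass to the limit.

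First I would differentiate \eqref{pmach-parabolic-equation} in $t$. Since the nonlinearity $\chi'\mapsto\ln\big(\chi'^{n}/(\chi'^{n-\a}\wedge\omega^\a)\big)$ depends on $u$ only through $\p\bpartial u$, the function $v:=\p u/\p t$ satisfies a linear parabolic equation $v_t=F^{i\bar j}v_{i\bar j}$ with \emph{no} zeroth- or first-order term, where $(F^{i\bar j})>0$ is the linearization at $\chi_u$ (its positivity being exactly the parabolicity of the flow). By the parabolic maximum principle, $\max_M v(\cdot,t)$ is nonincreasing and $\min_M v(\cdot,t)$ is nondecreasing in $t$. At $t=0$, $v(\cdot,0)=\ln\big(\chi^n/(\chi^{n-\a}\wedge\omega^\a)\big)-\ln\psi$ is $\le 0$ by the hypothesis $\chi^n/(\chi^{n-\a}\wedge\omega^\a)\le\psi$ and is bounded below by a fixed constant $-C_0$; hence $-C_0\le\p u/\p t\le 0$ for all $t\ge 0$. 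In particular $u(\cdot,t)$ is nonincreasing in $t$, so $\sup_M u(\cdot,t)\le u(\cdot,0)=0$ and $\inf_{M\times[0,t]}u=\inf_M u(\cdot,t)$.

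The heart of the matter, and the step I expect to be the main obstacle, is the uniform oscillation bound. By \eqref{pmach-int-thm-C2} and the previous step, $\tr_\omega\chi_u=\Delta u+\tr\chi\le C\,e^{A(u-\inf_M u(\cdot,t))}$, while from the flow $\chi_u^n/(\chi_u^{n-\a}\wedge\omega^\a)=e^{\p u/\p t}\,\psi$ lies between two positive constants; together with Maclaurin's inequality this gives two-sided bounds $c_0\,\omega^n\le\chi_u^n\le C\,e^{B(u-\inf_M u(\cdot,t))}\omega^n$. One then runs a Moser iteration (or an Alexandrov--Bakelman--Pucci type argument) for $\operatorname{osc}_M u(\cdot,t):=\sup_M u(\cdot,t)-\inf_M u(\cdot,t)$, in the spirit of the $L^\infty$ theory for complex Monge--Amp\`ere equations on Hermitian manifolds~\cite{TWv10b} and of the elliptic estimates in~\cite{Sun2013e}, using the cone condition $\chi\in\mathscr{C}_\a(\psi)$ to supply a comparison metric; this yields $\operatorname{osc}_M u(\cdot,t)\le C$ for all $t\ge 0$, and with $\sup_M u(\cdot,t)\le 0$ we obtain $-C\le u\le 0$ on $M\times[0,\infty)$. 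The delicate point is closing the iteration despite the exponential weight $e^{B(u-\inf u)}$: the naive pointwise maximum-principle argument at the spatial minimum only gives the useless bound $u\ge -C_0 t$, and it is precisely here that the improved (rather than doubly-exponential) form of the $C^2$ estimate of Theorem~\ref{pmach-int-thm-main} is essential.

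Given the uniform $C^0$ bound, \eqref{pmach-int-thm-C2} gives $\Delta u+\tr\chi\le C$ uniformly in $t$, hence (using also $\chi_u^n\ge c_0\omega^n$) a uniform two-sided bound on the eigenvalues of $\chi_u$; the flow is then uniformly parabolic with uniformly elliptic and bounded coefficients, so the parabolic Evans--Krylov theorem~\cite{Evans82},~\cite{Krylov82} and Schauder estimates, applied on unit time-intervals, give $\|u\|_{C^k(M\times[t,t+1])}\le C_k$ uniformly in $t$ for every $k$. Since $u(\cdot,t)$ is nonincreasing in $t$ and bounded below it converges pointwise to some $u_\infty$, and the uniform $C^\infty$ bounds upgrade this to $C^\infty$ convergence with $u_\infty\in C^\infty(M)$ and $\chi_{u_\infty}>0$; hence $\tilde u\to\tilde u_\infty:=u_\infty-\frac{\int_M u_\infty\,\omega^n}{\int_M\omega^n}$ in $C^\infty$. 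Moreover $-\int_M u(\cdot,t)\,\omega^n=\int_0^t\!\int_M(-\p u/\p s)\,\omega^n\,ds$ is nondecreasing and bounded, so $\int_M|\p u/\p t|\,\omega^n\to 0$; with the uniform bounds on $\p u/\p t$ and $\p^2 u/\p t^2$ this forces $\p u/\p t\to 0$ in $C^\infty$. Letting $t\to\infty$ in \eqref{pmach-parabolic-equation} gives $\ln\big(\chi_{u_\infty}^n/(\chi_{u_\infty}^{n-\a}\wedge\omega^\a)\big)=\ln\psi$, and since $\chi_{\tilde u_\infty}=\chi_{u_\infty}$ the pair $(\tilde u_\infty,b)$ with $b=0$ solves \eqref{pmach-elliptic-equation-actual}; this $b$ is the unique one, as $e^b=\chi_{\tilde u_\infty}^n/(\psi\,\chi_{\tilde u_\infty}^{n-\a}\wedge\omega^\a)$ is determined by $\tilde u_\infty$.
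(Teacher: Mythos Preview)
The step ``with $\sup_M u(\cdot,t)\le 0$ we obtain $-C\le u\le 0$'' is a non sequitur, and the gap it opens is fatal to your convergence argument. From $\operatorname{osc}_M u(\cdot,t)\le C$ and $\sup_M u(\cdot,t)\le 0$ you only get $\inf_M u(\cdot,t)\ge \sup_M u(\cdot,t)-C$, which says nothing without a lower bound on $\sup_M u(\cdot,t)$, and no such bound is available. In fact it can fail: the theorem allows $b<0$ in \eqref{pmach-elliptic-equation-actual} (on a general Hermitian manifold there is no cohomological identity forcing $b=0$, and Section~\ref{pmach-revisit} of the paper actually \emph{uses} this theorem to produce solutions with $b_0\le 0$), and whenever $b<0$ the oscillation of $\p_t u$ tends to $0$ while $\p_t u$ itself tends to $b$, so $u(x,t)\to -\infty$. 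Consequently your monotone-convergence argument for $u$, the integrability claim $\int_0^\infty\!\int_M|\p_t u|\,\omega^n<\infty$, and the conclusion $b=0$ all break down simultaneously.

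The paper's route around this is not a stylistic choice but the missing idea. From the oscillation bound one gets uniform estimates on $\chi_u$ and hence, via Evans--Krylov and Schauder, on all derivatives of $\p\bpartial u$ and of $\varphi:=\p_t u$, but \emph{not} on $u$ itself. The paper then proves a Li--Yau type Harnack inequality for the linearized equation $\p_t\varphi=\sum_{i,j}G^{i\bar j}\bpartial_j\p_i\varphi$ and, following Cao~\cite{Cao85} and Gill~\cite{Gill11}, deduces that the spatial oscillation of $\varphi$ decays exponentially: $\sup_M\varphi(\cdot,t)-\inf_M\varphi(\cdot,t)\le Ce^{-c_0 t}$. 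Combined with $\int_M\p_t\tilde u\,\omega^n=0$ this gives $|\p_t\tilde u|\le Ce^{-c_0 t}$, so $\tilde u$ (not $u$) converges in $C^\infty$, and $b$ is recovered as the limit of $(\int_M\omega^n)^{-1}\int_M\p_t u\,\omega^n$, in general nonzero. Your plan through the oscillation bound matches the paper; it is this Harnack step that you are missing.
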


Should we have more knowledge of the manifold, it would be possible to obtain deeper results. When $\chi$ and $\omega$ are both K\"ahler, we are able to solve Donaldson's problem formerly proven by the flow method in \cite{SW08}, \cite{FLM11}. It is worth a mention that the former results are only for the special case $\psi = c$ while we only require that $\psi \geq c$.

It is noticeable that in \cite{Sun2013e}, the elliptic approach has no obstacle when $\psi\geq c$ while the former parabolic approaches have difficulties to treat non-constant $\psi$. It is natural to ask: is the flow method also able to treat $\psi \geq c$? After a study of $J$-functional~\cite{Chen00b}, we use the functional to normalize the velocity $\p_t u$ instead of the function $u$. The idea is quite simple and natural: we pull back the surface at every instantaneous moment.
\begin{theorem}
\label{pmach-int-theorem-convergence-Kahler}
Let $(M^n,\omega)$ be a closed K\"ahler manifold of complex dimension $n$ and $\chi$ is also K\"ahler. Suppose that $\chi \in \mathscr{C}_\a (\psi)$ and $\psi \geq c$ for all $x \in M$,
where $c$ is defined in \eqref{pmach-kahler-constant}. Then there exists a uniform constant $C$ such that for all time $t \geq 0$,
\begin{equation}
	\sup_{x\in M} u(x,t) - \inf_{x\in M} u(x,t) < C.
\end{equation}
Consequently, $\tilde u$ is $C^\infty$ convergent to a smooth function $\tilde u_\infty$. Moreover, there is a unique real number $b$ such that the pair $(\tilde u_\infty , b)$ solves equation~\eqref{pmach-elliptic-equation-actual}.

\end{theorem}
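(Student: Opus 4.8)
By Theorem~\ref{pmach-int-thm-main} there is a long-time solution obeying the $C^2$ estimate~\eqref{pmach-int-thm-C2}, so the task reduces to three steps: a uniform oscillation bound $\sup_M u(\cdot,t)-\inf_M u(\cdot,t)<C$; its promotion to uniform $C^\infty$ bounds for $\tilde u$; and convergence with identification of the limit. I begin with a maximum principle for the velocity $\dot u:=\p_t u$. Differentiating~\eqref{pmach-parabolic-equation} in $t$ gives $\p_t\dot u=\cL_t\dot u$, where $\cL_t\phi:=DF|_{\chi_u}\big[\frac{\sqrt{-1}}{2}\p\bpartial\phi\big]$ is the linearization of $F(\chi_u):=\ln(\chi_u^n/\chi_u^{n-\a}\wedge\o^\a)$; since $F$ is strictly monotone in the eigenvalues of $\chi_u$ relative to $\o$, $\cL_t$ is elliptic and has no zeroth order term, so $\sup_M\dot u(\cdot,t)$ is non-increasing, $\inf_M\dot u(\cdot,t)$ is non-decreasing, and $|\dot u|\le C_0$ for all $t$ with $C_0$ depending only on $\chi,\o,\psi$. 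Because $M$ is Kähler, $\int_M\chi_u^n=\int_M\chi^n=c\int_M\chi_u^{n-\a}\wedge\o^\a$, so $\psi\ge c$ yields $\int_M(e^{\dot u}-1)\,\psi\,\chi_u^{n-\a}\wedge\o^\a\le0$, i.e. $\dot u\le0$ somewhere for each $t$. Finally, since $\chi_{\tilde u}=\chi_u$, the potential $\tilde u$ satisfies a flow of the same form as~\eqref{pmach-parabolic-equation} with $\psi$ replaced by the spatially constant multiple $\psi\,e^{\m(t)}$, $\m(t):=(\int_M\o^n)^{-1}\int_M\dot u\,\o^n$, and $|\m(t)|\le C_0$.

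The oscillation bound is the crux, and it is here that the hypothesis $\psi\ge c$ — in place of the $\psi=c$ of the earlier parabolic works \cite{SW08} — is what is needed. Following a study of the $J$-functional \cite{Chen00b}, I would normalize the \emph{velocity} rather than $u$: take $\l(t)$ to be the average of $\dot u$ against the natural measure $d\nu_t$ attached to the $J$-functional and set $v(\cdot,t):=u(\cdot,t)-\int_0^t\l(s)\,ds$, so that $\chi_v=\chi_u$ and the normalized flow $\p_t v=F(\chi_v)-\ln\psi-\l(t)$ decreases a fixed $J$-type energy $\cF$ on the space of $\chi$-plurisubharmonic potentials: $\frac{d}{dt}\cF(v(\cdot,t))=-\int_M\big(\dot u-\l(t)\big)^2\,d\nu_t\le0$. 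The cone condition $\chi\in\mathscr{C}_\a(\psi)$ together with $\psi\ge c$ makes $\cF$ bounded below, in fact \emph{proper}: $\cF(w)\ge\e\big(\sup_M w-\inf_M w\big)-C$ for every $\chi$-plurisubharmonic $w$. Establishing this properness, and making precise why $\psi\ge c$ is exactly what the velocity normalization requires, is the main obstacle. Granting it, $\cF(v(\cdot,t))\le\cF(v(\cdot,0))$ forces $\sup_M u(\cdot,t)-\inf_M u(\cdot,t)=\sup_M v(\cdot,t)-\inf_M v(\cdot,t)\le\e^{-1}\big(\cF(v(\cdot,0))+C\big)$ for all $t\ge0$.

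Given the oscillation bound, $\tilde u$ has vanishing $\o^n$-average and bounded oscillation, hence $|\tilde u|\le C$. Applying the $C^2$ estimate of Theorem~\ref{pmach-int-thm-main} to the flow satisfied by $\tilde u$ — of the same form, with $\psi$ replaced by the spatially constant, uniformly bounded multiple $\psi\,e^{\m(t)}$, so that the data controlling the estimate is unchanged — gives $\Delta\tilde u+\tr\chi\le C\,e^{A(\tilde u-\inf_{M\times[0,t]}\tilde u)}\le C$, and then the Evans--Krylov theory and Schauder estimates yield uniform $C^\infty$ bounds for $\tilde u$, hence for $\dot u=F(\chi_{\tilde u})-\ln\psi$. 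For the convergence, monotonicity and the lower bound of $\cF$ give $\int_0^\infty\!\int_M(\dot u-\l(t))^2\,d\nu_t\,dt<\infty$; since $d\nu_t\ge c_0\,\o^n$ by the uniform bounds and $\frac{d}{dt}\|\dot u-\l\|_{L^2}^2$ is bounded above, $\|\dot u(\cdot,t)-\l(t)\|_{L^2}\to0$ as $t\to\infty$. With the uniform $C^\infty$ bounds and interpolation, every sequence of times has a subsequence along which $\tilde u(\cdot,t_j)\to\tilde u_\infty$ in $C^\infty$, and the limit satisfies $F(\chi_{\tilde u_\infty})\equiv\ln\psi+b$, i.e. $\chi_{\tilde u_\infty}^n=e^b\psi\,\chi_{\tilde u_\infty}^{n-\a}\wedge\o^\a$, for a real constant $b$. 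Uniqueness of such a pair $(\tilde u_\infty,b)$ with $\int_M\tilde u_\infty\,\o^n=0$ follows from the strict monotonicity of $\chi\mapsto\chi^n/(\chi^{n-\a}\wedge\o^\a)$: evaluating $F(\chi_{\tilde u_1})-F(\chi_{\tilde u_2})$ at a point where $\tilde u_1-\tilde u_2$ is maximal forces first $b_1=b_2$, and then, writing $F(\chi_{\tilde u_1})-F(\chi_{\tilde u_2})=a^{i\bar j}(\tilde u_1-\tilde u_2)_{i\bar j}=0$ with $(a^{i\bar j})>0$, the strong maximum principle forces $\tilde u_1\equiv\tilde u_2$. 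Since all subsequential limits coincide with this unique $\tilde u_\infty$, the whole family $\tilde u(\cdot,t)$ converges to $\tilde u_\infty$ in $C^\infty$ as $t\to\infty$, and the pair $(\tilde u_\infty,b)$ solves~\eqref{pmach-elliptic-equation-actual}.
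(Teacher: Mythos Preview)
Your sketch has a genuine gap precisely where you flag it: the properness of the functional $\cF$. You acknowledge that ``establishing this properness \ldots\ is the main obstacle'' and then proceed by granting it; but this is the entire content of the oscillation bound, and nothing you write indicates how to obtain $\cF(w)\ge\e\,\mathrm{osc}\,w-C$ under the hypotheses. For the classical $J$-flow ($\psi=c$) such coercivity is a deep consequence of the cone condition (Song--Weinkove), and for variable $\psi\ge c$ it is not clear that the functional you have in mind even exists with the gradient-flow identity $\frac{d}{dt}\cF(v)=-\int_M(\dot u-\l)^2\,d\nu_t$; you never specify $d\nu_t$ or $\cF$, and the equation is not obviously a gradient flow for non-constant $\psi$. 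Without this, neither the oscillation bound nor your convergence argument (which uses $\int_0^\infty\!\int_M(\dot u-\l)^2\,d\nu_t\,dt<\infty$) goes through.

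The paper avoids properness altogether. Its route is: (i) define $\hat u:=u-J_\a(u)/\!\int_M\chi^{n-\a}\wedge\o^\a$ and show $J_\a(\hat u)=0$ along the flow, so that a Green's-function argument in the style of Yau gives $0\le\sup_M\hat u\le -C_1\inf_M\hat u+C_2$; (ii) observe that $\psi\ge c$ and Jensen's inequality make $J_\a(u)$ non-increasing, so at any time $t_i$ where $\inf_M\hat u(\cdot,t_i)$ achieves its running minimum one also has $\inf_M u(\cdot,t_i)=\inf_{M\times[0,t_i]}u$; (iii) at such $t_i$ the sharp $C^2$ estimate \eqref{pmach-int-thm-C2} reads $w(\cdot,t_i)\le Ce^{A(u(\cdot,t_i)-\inf_M u(\cdot,t_i))}$, and the Tosatti--Weinkove argument then forces $\mathrm{osc}\,u(\cdot,t_i)\le C$, contradicting unboundedness. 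Thus the $J$-functional is used only for the \emph{monotonicity} that aligns the space-time infimum with a time-slice infimum, not for any coercivity. For the convergence the paper does not use an energy-integrability/subsequence argument either: it proves a Li--Yau type Harnack inequality for $\dot u$ with respect to the linearized metric $G_{i\bar j}=F\,F_{i\bar j}$, deduces exponential decay of the oscillation of $\dot u$, and hence direct $C^\infty$ convergence of $\tilde u$. Your alternative convergence scheme would be acceptable once the oscillation bound and the functional identity are in hand, but as written both hinge on the unproven properness.
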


We organize the paper as follows. In section~\ref{pmach-pre}, we state and show some preliminary knowledge related to equation~\eqref{pmach-parabolic-equation}. In section~\ref{pmach-C2}, we establish the improved $C^2$ estimate. In section~\ref{pmach-long}, we study the long time existence of the solution flow. The $C^2$ estimate proven in section~\ref{pmach-C2} helps us to obtain the uniform $C^0$ estimate, and hence we show that higher order estimates are also uniform. In section~\ref{pmach-Harnack}, we give a proof for a Li-Yau type Harnack inequality. In section~\ref{pmach-convergence}, we apply the inequality to show that the time derivative of $\tilde u$ decays exponentially. As an immediate result, we show that $\tilde u$ convergences to a smooth function $\tilde u_\infty$ , which solves equation~\eqref{pmach-elliptic-equation-actual} for some $b$. In section~\ref{pmach-revisit}, we apply Theorem~\ref{pmach-int-theorem-convergence-Hermitian} to carrying out method of continuity, in order to avoid using some specific knowledge.

\bigskip

\section{Preliminaries}
\label{pmach-pre}
\setcounter{equation}{0}
\medskip

We shall follow \cite{GSun12} for notations. In particular, $g$ and $\nabla$ will denote the Riemannian metric and the corresponding Chern connection of $(M, \omega)$.  In local coordinates $z = (z^1 , \cdots , z^n)$ we have
\begin{equation}
	\label{pmach-formula-2}
    \left\{
    \begin{aligned}
        T^k_{ij} =& 
         \sum_l g^{k\bar l} \left(\f{\p g_{j\bar l}}{\p z^i} - \frac{\p g_{i\bar l}}{\p z^j}\right) \,,  \\
        R_{i\bar jk\bar l} =& 
         - \frac{\p^2 g_{k\bar l}}{\p z^i \p \bar z^j} + \sum_{p,q} g^{p\bar q} \f{\p g_{k\bar q}}{\p z^i} \f{\p g_{p\bar l}}{\p\bar z^j} .
    \end{aligned}
    \right.
\end{equation}

For convenince, we set
\begin{equation}
\label{int:definition-X}
X := \chi_u = \chi+\frac{\sqrt{-1}}{2}\partial\bar\partial u \,,
\end{equation}
and thus
\begin{equation}
\label{int:definition-X-coefficients}
X_{i\bar j} = \chi_{i\bar j} + \bpartial_j\p_i u\,.
\end{equation}
Also, we denote the coefficients of $X^{-1}$ by $X^{i\bar j}$. It is easy to see that
\begin{equation}
\label{pmach-X-covariant-1}
    	\overline{X_{i\bar jk}} = X_{j\bar i\bar k} \,.
\end{equation}
Assuming at the point $p$,  $g_{i\bar j} = g_{ij}$ and $X_{i\bar j}$ is diagonal in a specific chart.  
Therefore,
\begin{equation}
\label{pmach-formula-X-1}
\begin{split}
     	X_{i\bar ij\bar j} - X_{j\bar ji\bar i} &= R_{j\bar ji\bar i}X_{i\bar i}  -  R_{i\bar ij\bar j}X_{j\bar j} + 2 \mathfrak{Re} \Big\{\sum_p \overline{T^p_{ij}}X_{i\bar pj}\Big\} \\
	&\hspace{6em}- \sum_{p} T^p_{ij} \overline{T^p_{ij}} X_{p\bar p} - G_{i\bar ij\bar j},\\
\end{split}
\end{equation}
where
\begin{equation}
\label{pmach-formula-G-coefficient}
\begin{aligned}
    	G_{i\bar ij\bar j} &= \chi_{j\bar ji\bar i} - \chi_{i\bar ij\bar j} + \sum_p R_{j\bar ji\bar p}\chi_{p\bar i} -\sum_p R_{i\bar ij\bar p}\chi_{p\bar j}  \\
    	&\hspace{3em} + 2\mathfrak{Re}\Big\{\sum_p \overline{T^p_{ij}}\chi_{i\bar pj} \Big\} - \sum_{p,q}T^p_{ij}\overline{T^q_{ij}}\chi_{p\bar q}\,.
\end{aligned}
\end{equation}

Let $S_\a (\lambda)$ denote the $\a$-th elementary symmetric polynomial of $\lambda \in \bfR^n$,
\begin{equation}
	S_\a (\lambda) = \sum_{1 \leq i_1 < \cdots < i_\a \leq n} \lambda_{i_1} \cdots \lambda_{i_\a} \,.
\end{equation}
For a square matrix $A$, define $S_\a (A) = S_\a (\lambda(A))$ where $\lambda(A)$ denote the eigenvalues of $A$. Further, write $S_\a (X) = S_\a (\lambda_* (X))$ and $S_\a (X^{-1}) = S_\a (\lambda^* (X))$ where $\lambda_* (X)$ and $\lambda^* (X)$ denote the eigenvalues of a Hermitian matrix $X$ with respect to $\omega$ and to $\omega^{-1}$, respectively. Unless otherwise indicated we shall use $S_\alpha$ to denote $S_\alpha(X^{-1})$ when no possible confusion would occur.

In local coordinates, we can write equation~\eqref{pmach-parabolic-equation} in the form
\begin{equation}
	\frac{\p u}{\p t} = \ln S_n (\chi_u) - \ln S_{n - \a} (\chi_u) + \ln C^\a_n - \ln \psi .
\end{equation}
or equivalently,
\begin{equation}
\label{pmach-main-equation}
    	\frac{\partial u}{\partial t} = \ln C^\alpha_n - \ln \psi - \ln S_\alpha(\chi^{-1}_u).
\end{equation}

Differentiating this equation twice  at $p$ and applying the strong concavity of $S_\a$ as in \cite{GSun12} , we have
\begin{equation}
\label{pmach-formula-S-partial}
     	\partial_t\partial_l u= - \frac{\partial_l \psi}{\psi} + {S^{-1}_\a} \sum_i S_{\a -1;i} (X^{i\bi})^2 X_{i\bi l} ,
\end{equation}
and 
\begin{equation}
\label{pmach-formula-S-double-derivative}
\begin{aligned}
    	\partial_t\bar\partial_l\partial_l u &\leq  C  + S^{-2}_\a\sum_i \p_i S_\a \bpartial_i S_\a - S^{-1}_\a\sum_{i,j} S_{\alpha -1;i}  (X^{i\bar i})^2 X^{j\bar j}X_{j\bar i\bar l}X_{i\bar j l} \\
    	&\hspace{5em} +  S^{-1}_\a \sum_i S_{\alpha -1;i}(X^{i\bar i})^2 X_{i\bar il\bar l} .
\end{aligned}
\end{equation}

Since $\ul u \in C^2(M)$ and $\chi_{\ul u} >0$, 
\begin{equation}
    \epsilon\omega \leq \chi_{\ul u} \leq \epsilon^{-1} \omega
\end{equation}
for some $\epsilon > 0$.

Here we write down an alternative statement, which will be used later. It is a  key theorem in \cite{FLM11}.
\begin{theorem}
\label{pmach-theorem-alternative}
We have an alternative
	\begin{equation}
        \sum_i S_{\alpha -1;i} (X^{i\bar i})^2 (\chi_{i\bar i} + \bpartial_i\partial_i\ul u) \geq (1+\theta) \alpha \Big(\frac{\psi}{C^\alpha_n}\Big)^{1/\alpha}S^{1+1/\alpha}_\alpha
	\end{equation}
for some $\theta> 0$ , or
	\begin{equation*}
		w \leq C
	\end{equation*}
for some $C>0$ large enough and independent on $u$.
\end{theorem}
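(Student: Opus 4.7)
The statement is the parabolic/Hermitian analogue of the main pointwise dichotomy of Fang--Lai--Ma~\cite{FLM11}, so the plan is to adapt their eigenvalue argument and let the cone condition $\chi \in \mathscr{C}_\alpha(\psi)$ supply the strict gap that gives the $\theta$.

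First, I would use the openness of the cone condition to upgrade it to a quantitative form: choose $\underline{u} \in C^2(M)$ with $\chi_{\underline u} > 0$ and
\begin{equation*}
n\chi_{\underline u}^{n-1} \;\geq\; (1+3\theta_0)(n-\alpha)\psi\,\chi_{\underline u}^{n-\alpha-1}\wedge\omega^\alpha
\end{equation*}
as $(n-1,n-1)$-forms for some $\theta_0>0$ uniform on $M$. Compactness of $M$ and continuity of the cone condition justify this. Next, at any fixed point $p$ I would choose a unitary frame for $\omega$ in which $X=\chi_u$ is diagonal with eigenvalues $\mu_1\le\cdots\le\mu_n$, and rewrite the desired inequality as
\begin{equation*}
\mathcal{A}\;:=\;\sum_i S_{\alpha-1;i}(X^{-1})\,\mu_i^{-2}\,(\chi_{\underline u})_{i\bar i}\;\geq\;(1+\theta)\alpha\Big(\frac{\psi}{C^\alpha_n}\Big)^{\!1/\alpha}S_\alpha^{\,1+1/\alpha}.
\end{equation*}

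The argument is a dichotomy on the size of $\mu_n$. If $\mu_n\le C_0$ for a constant $C_0$ depending only on the background data and $\underline u$, then $w$ (the quantity $\Delta u+\tr\chi$ being bounded in the $C^2$ estimate) is under control and the second alternative $w\le C$ holds. Otherwise $\mu_n$ is much larger than the remaining $\mu_i$. In this regime the $i=n$ term of $\mathcal{A}$ carries a factor $\mu_n^{-2}$ and is negligible, while all other $S_{\alpha-1;i}(X^{-1})$ for $i<n$ become, to leading order, the corresponding $(n-1)$-variable symmetric functions in $(1/\mu_1,\dots,1/\mu_{n-1})$. The leading-order inequality thereby collapses to a sharp inequality purely among $\mu_1,\dots,\mu_{n-1}$ and $(\chi_{\underline u})_{i\bar i}$, which, via Newton--Maclaurin combined with the identity $\sum_i S_{\alpha-1;i}(\lambda)\lambda_i=\alpha S_\alpha(\lambda)$, is precisely equivalent to the strict cone condition for $\chi_{\underline u}$; the $3\theta_0$ gap then yields an honest $(1+\theta)$ factor in the leading term.

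The principal obstacle is quantitative: the $\mu_n^{-1}$-subleading corrections to both sides must be shown not to eat into the $\theta_0$ gap. This is achieved by a compactness argument on the eigenvalue configurations with $\mu_n\ge C_0$, letting $C_0$ be chosen large enough in terms of $\theta_0$ and the fixed data $(\underline u,\psi,\omega,\chi)$. The computation is essentially unchanged from \cite{FLM11} since only the algebraic structure of $S_\alpha$ and the strict subsolution inequality are used; no Kähler hypothesis enters. Once this is in place, either the subleading estimates go through and we obtain the first alternative, or we are forced into the regime $\mu_n\le C_0$ and hence $w\le C$, as asserted.
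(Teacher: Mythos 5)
The paper does not actually prove this theorem: it is stated as ``a key theorem in [FLM11]'' and invoked as a black box, so there is no in-text argument to compare against. Your sketch reconstructs the Fang--Lai--Ma strategy, which is indeed the right framework: strengthen the cone condition by a uniform gap $\theta_0$ (by compactness of $M$), split on whether the largest eigenvalue $\mu_n$ of $X=\chi_u$ exceeds a threshold $C_0$, and in the large-$\mu_n$ regime read the $(1+\theta)$-inequality off a degenerate limit of the weighted sum.

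There is, however, a genuine gap in your ``compactness argument on the eigenvalue configurations with $\mu_n\ge C_0$.'' Taken literally -- over all positive-definite $X$ with $\mu_n\ge C_0$, with only the fixed data $(\underline u,\psi,\omega,\chi)$ -- the conclusion is false. For $n=2$, $\alpha=1$, let $\mu_1=\mu_2=t\to\infty$. Then $w\to\infty$, the left side of the target inequality behaves like $t^{-2}\big((\chi_{\underline u})_{1\bar 1}+(\chi_{\underline u})_{2\bar 2}\big)$, and the right side like $2(1+\theta)\psi\, t^{-2}$; the cone condition only gives $(\chi_{\underline u})_{i\bar i}>\tfrac{\psi}{2}$, hence $(\chi_{\underline u})_{1\bar 1}+(\chi_{\underline u})_{2\bar 2}>\psi$, which does not dominate $2(1+\theta)\psi$. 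What rules this out in the intended setting is the flow identity $S_\alpha(X^{-1})=\tfrac{C^\alpha_n}{\psi}e^{-\partial_t u}$ together with the maximum-principle bound on $\partial_t u$ established at the end of Section~2 of the paper: this pins $S_\alpha(X^{-1})$ between uniform positive constants and forbids all eigenvalues from diverging simultaneously. You never invoke this constraint, so your argument cannot close as written. Separately, the claim that the limiting inequality ``is precisely equivalent to the strict cone condition for $\chi_{\underline u}$'' is the actual algebraic content of the lemma and is asserted rather than derived: the cone condition is an $(n-1,n-1)$-form inequality in $\chi_{\underline u}$ alone, while your left side carries solution-dependent weights $S_{\alpha-1;i}(X^{-1})(X^{i\bar i})^2$, and the bridge between them (via Newton--Maclaurin and the identity $\sum_i S_{\alpha-1;i}\lambda_i=\alpha S_\alpha$) is exactly what needs to be carried out, not just cited.
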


Moreover, by the maximum principle, $\frac{\p u}{\p t}$ reaches the extremal values at $t=0$, that is, on a maximal time interval $[0,T)$,
\begin{equation}
	\left|\frac{\p u}{\p t}\right| \leq \sup_{M\times\{0\}} \left|\frac{\p u }{\p t}\right|.
\end{equation}
As an immediate corollary, 
\begin{equation}
	\left|\frac{\p\tilde u}{\p t}\right| \leq 2 \sup_{M\times\{0\}} \left|\frac{\p u }{\p t}\right| ,
\end{equation}
and
\begin{equation}
	\left|\ln  \frac{\chi_u^n }{\chi_u^{n - \a} \wedge \omega^\a} - \ln \psi\right| \leq \sup_{M\times\{0\}}\left|\ln  \frac{\chi_u^n }{\chi_u^{n - \a} \wedge \omega^\a} - \ln \psi\right|.
\end{equation}
Therefore, the flow remains Hermitian at any time.
\bigskip

\section{The second order estimates}
\label{pmach-C2}
\setcounter{equation}{0}
\medskip

In this section we derive the second order estimate for admissible solutions.

\begin{proposition}
    \label{pmach-proposition-C2}
    Let $u\in C^4(M\times[0,T))$ be an admissible solution of equation~\eqref{pmach-parabolic-equation} and $w = \Delta u + tr\chi$. Then there are uniform positive constants $C$ and $A$ such that
\begin{equation}
\label{pmach-C2-1}
 w \leq C e^{A (u - \inf_{M\times [0,t]} u)},
\end{equation}
where $C$, $A$ depend only on geometric data.
\end{proposition}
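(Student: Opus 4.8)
The plan is to apply the maximum principle to an auxiliary test function of the form $W = \ln w + \phi(u - \inf_{M\times[0,t]} u)$, or more precisely $W = \ln w - \gamma (u - \inf u) + \eta(|\nabla u|^2)$ with $\phi$ a suitably chosen function, since this is the standard mechanism that produces an estimate of the exponential shape~\eqref{pmach-C2-1}. Here $w = \Delta u + \tr\chi = \sum_i g^{i\bar i} X_{i\bar i}$ (at a point where the metrics are diagonalized) is, up to a controlled factor, the largest eigenvalue $\lambda_1$ of $X$ with respect to $\omega$; it is cleaner to test $\ln \lambda_1$ but to avoid the non-smoothness of $\lambda_1$ one works with $w$ itself, paying a harmless $O(1)$ price. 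Fix a time slice $[0,t]$, let the maximum of $W$ over $M\times[0,t]$ be attained at $(p,t_0)$; if $t_0 = 0$ the bound is immediate from the initial data, so assume $t_0 > 0$. Then at $(p,t_0)$ we have $\nabla W = 0$, the complex Hessian of $W$ is $\leq 0$, and $\partial_t W \geq 0$, i.e. $\mathcal{L} W \geq 0$ where $\mathcal{L} := \partial_t - \sum_i S_\alpha^{-1} S_{\alpha-1;i} (X^{i\bar i})^2 \bar\partial_i\partial_i$ is the linearized parabolic operator read off from~\eqref{pmach-main-equation}.

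The key computation is to expand $\mathcal{L}(\ln w)$ at $(p,t_0)$. Differentiating~\eqref{pmach-main-equation} once and twice and using the commutation formula~\eqref{pmach-formula-X-1} — exactly the inputs recorded in the Preliminaries as~\eqref{pmach-formula-S-partial} and~\eqref{pmach-formula-S-double-derivative} applied to the top eigenvalue direction rather than to $\partial_l\partial_l u$ — gives
\begin{equation*}
\mathcal{L}(\ln w) \leq \frac{C}{w}\sum_i S_\alpha^{-1} S_{\alpha-1;i}(X^{i\bar i})^2 + (\text{bad third-order terms}) - \frac{1}{w^2}\Big|\sum_i S_\alpha^{-1} S_{\alpha-1;i}(X^{i\bar i})^2 w_i\Big|^2,
\end{equation*}
and the standard trick is that the ``bad'' third-order terms $-\frac{1}{w}S_\alpha^{-1}\sum_{i,j} S_{\alpha-1;i}(X^{i\bar i})^2 X^{j\bar j}|X_{i\bar j 1}|^2$ type expressions (with the curvature/torsion error terms from $G$) are either negative or, after using the critical equation $\nabla W = 0$ to substitute $w_i = (\gamma u_i - \eta' \text{-terms})\, w$, get absorbed into the good negative quadratic term via Cauchy–Schwarz. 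Meanwhile $\mathcal{L}(-\gamma(u-\inf u)) = -\gamma\partial_t u + \gamma \sum_i S_\alpha^{-1}S_{\alpha-1;i}(X^{i\bar i})^2 X_{i\bar i} - \gamma\sum_i S_\alpha^{-1}S_{\alpha-1;i}(X^{i\bar i})^2 \chi_{i\bar i}$; the middle term here is $\gamma\cdot(\text{positive})$ and, crucially, a lower bound for $\sum_i S_\alpha^{-1}S_{\alpha-1;i}(X^{i\bar i})^2 X_{i\bar i}$ dominating a fixed multiple of $\sum_i S_\alpha^{-1}S_{\alpha-1;i}(X^{i\bar i})^2$ (the ``trace'' of the linearized coefficients) is precisely what makes it possible to beat the constant $C$ coming from the Hessian of $\tr\chi$ and the gradient term. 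This is where the cone condition $\chi\in\mathscr{C}_\alpha(\psi)$ enters, via the alternative Theorem~\ref{pmach-theorem-alternative}: in the first branch of that alternative, $\sum_i S_{\alpha-1;i}(X^{i\bar i})^2(\chi_{i\bar i}+\bar\partial_i\partial_i \underline u)\geq (1+\theta)\alpha(\psi/C_n^\alpha)^{1/\alpha} S_\alpha^{1+1/\alpha}$, which furnishes the needed concavity gain; in the second branch $w\leq C$ directly and we are done.

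Assembling these, at the maximum point one obtains an inequality of the form $0 \leq \mathcal{L} W \leq -\epsilon\sum_i S_\alpha^{-1}S_{\alpha-1;i}(X^{i\bar i})^2\lambda_1 + C\sum_i S_\alpha^{-1}S_{\alpha-1;i}(X^{i\bar i})^2 + C$, plus the gradient contributions from $\eta(|\nabla u|^2)$ which are controlled by choosing $\eta$ appropriately (a bounded increasing function, so that $\eta'$ can swallow the cross terms $\mathcal{L}(|\nabla u|^2)$ produces and the term involving $|\nabla u|^2$ itself). Since $\epsilon, C$ are uniform and $\sum_i S_\alpha^{-1}S_{\alpha-1;i}(X^{i\bar i})^2 \geq c > 0$ is bounded below (by the Hermitian bounds on $\chi_{\underline u}$ and the flow's $C^0$-type control on $\partial_t u$), this forces $\lambda_1(p,t_0) \leq C$, hence $w(p,t_0)\leq C$, hence $W(p,t_0)\leq \ln C - \gamma\cdot 0 + \eta(\ldots)\leq C$, and therefore $W\leq C$ everywhere, which rearranges to~\eqref{pmach-C2-1} with $A = \gamma$. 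I expect the main obstacle to be the careful bookkeeping of the third-order terms $X_{i\bar j 1}$: unlike in the K\"ahler case, the torsion terms $T^p_{ij}$ and the mixed curvature terms in $G_{i\bar i j\bar j}$ prevent a clean cancellation, and one must either use a modified test function with an extra term (e.g. $\ln(w) + $ a multiple of $|\nabla u|^2_\omega$ measured in the right metric) or exploit the specific algebra of $S_\alpha^{-1}S_{\alpha-1;i}$ together with $\nabla W = 0$ to show these error terms are dominated — getting the dependence to be only on $u - \inf u$ (and not on $\sup u$ or oscillation) is exactly the improvement over~\cite{Sun2013p}, and it hinges on never introducing a term whose sign requires an upper bound on $u$.
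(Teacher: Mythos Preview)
Your overall architecture --- maximum principle on a test function combining $\ln w$ with a barrier in $u$, with the cone condition entering through Theorem~\ref{pmach-theorem-alternative} --- matches the paper's, and you correctly identify the Hermitian torsion as the main obstacle. The gap is in how you propose to handle it. The paper's test function is $e^\phi w$ with the Phong--Sturm barrier $\phi = -A(u-\ul u) + E_1$, where $E_1 = (u-\ul u-\inf_{M_t}(u-\ul u)+1)^{-1}$. After substituting the critical equation $\partial_i w = -w\,\partial_i\phi$, the torsion cross term $\frac{2}{w}\sum S_{\alpha-1;i}(X^{i\bar i})^2\mathfrak{Re}\{\hat T^k_{ji}\chi_{k\bar j}\bar\partial_i w\}$ becomes a quantity of order $A\sum S_{\alpha-1;i}(X^{i\bar i})^2|\partial_i(u-\ul u)|$. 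The point of $E_1$ is that $\bar\partial_i\partial_i E_1$ contributes a \emph{positive} term $2E_1^3|\partial_i(u-\ul u)|^2$ with exactly the right weights to absorb this via Cauchy--Schwarz, leaving an error $\frac{CA^2}{wE_1^3}\sum S_{\alpha-1;i}(X^{i\bar i})^2$; the argument then splits on whether $w > A(u-\ul u-\inf+1)^{3/2}$ (error harmless, the alternative yields a contradiction) or not (the exponential bound follows directly).

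Your proposed $\eta(|\nabla u|^2)$ does not supply a positive first-order term $|\partial_i u|^2$ with those weights: expanding $\mathcal{L}(\eta(|\nabla u|^2))$ gives good \emph{second}-order negatives $-\eta'\sum S_\alpha^{-1}S_{\alpha-1;i}(X^{i\bar i})^2|u_{k\bar i}|^2$, and if $\eta$ is bounded then $\eta'\to 0$ as $|\nabla u|^2\to\infty$, so its effect vanishes precisely when needed. Barriers of that type lead to Hou--Ma--Wu style estimates $w\leq C(1+\sup|\nabla u|^2)$, which require a further gradient bound or blow-up/Liouville argument and do not yield~\eqref{pmach-C2-1} directly. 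Two smaller corrections: the barrier must involve $u-\ul u$, since the alternative compares against $\chi_{i\bar i}+\bar\partial_i\partial_i\ul u$ and it is $\mathcal{L}(-(u-\ul u))$ that produces the matching term; and your asserted final inequality with $-\epsilon\lambda_1\sum S_\alpha^{-1}S_{\alpha-1;i}(X^{i\bar i})^2$ does not arise --- note $\sum_i S_\alpha^{-1}S_{\alpha-1;i}(X^{i\bar i})^2 X_{i\bar i}=\alpha$ is constant. The $w$-scaling of the good term in the paper comes from working with $e^\phi w$ rather than $\ln w+\phi$, so that both the gain from the alternative and the curvature errors carry a factor of $w$ and can be balanced for $A$ large.
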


\begin{proof}
Consider the function $ e^\phi w$ where $\phi$ is to be specified later. Suppose that $e^\phi w$ its maximum at some point $(p,t_0)\in M_t = M\times(0,t]$. Choose a local chart near $p$ such that $g_{i\bar j} = \delta_{ij}$ and $X_{i\bar j}$ is diagonal at $p$ when $t = t_0$. Therefore,  we have at the point $(p,t_0)$
\begin{equation}
\label{pmach-proposition-C2-max-1-1}
\frac{\partial_l w}{w} + \partial_l\phi = 0,
\end{equation}
\begin{equation} 
\label{pmach-propostition-C2-max-1-2}
\frac{\bar\partial_l w}{w} + \bar\partial_l\phi = 0,
\end{equation}
\begin{equation}
\label{pmach-proposition-C2-max-1-t}
\frac{\p_t w}{w} + \p_t \phi \geq 0
\end{equation}
and
\begin{equation}
\label{pmach-proposition-C2-max-2}
\frac{\bar\partial_l\partial_l w}{w} - \frac{\bar\partial_l w\partial_l w}{w^2} + \bar\partial_l\partial_l\phi \leq 0 .
\end{equation}

Applying $\frac{\p}{\p t} - S^{-1}_\a \sum_l S_{\a -1;l} (X^{l\bar l})^2 \p_l\bpartial_l $ to the function $e^\phi w$, 
\begin{equation}
\label{pmach-proposition-C2-inequality-max}
\begin{aligned}
	&\, \p_t\left(e^\phi w\right) - S^{-1}_\a \sum_l S_{\a -1;l} (X^{l\bar l})^2 \p_l\bpartial_l  \left(e^\phi w\right) \\
	=&\, e^\phi \left(w\p_t \phi + \p_t w -  S^{-1}_\a \sum_l S_{\a -1;l} (X^{l\bar l})^2 \left(w\p_l\bpartial_l  \phi  - w^{-1}  |\p_l w|^2  +  \p_l\bpartial_l w  \right) \right).
\end{aligned}
\end{equation}
This formula, with \eqref{pmach-proposition-C2-max-1-t} and \eqref{pmach-proposition-C2-max-2}, leads to that at $(p,t_0)$
\begin{equation}
\label{pmach-proposition-C2-inequality-max-1}
\begin{aligned}
    	0 &\leq w\partial_t \phi + \partial_t w -   S^{-1}_\a  \sum_l S_{\alpha -1;l} (X^{l\bar l})^2\left( w \bar\partial_l\partial_l \phi - w^{-1} |\partial_l w|^2 + \sum_{i}  X_{i\bar il\bar l} \right).
\end{aligned}
\end{equation}

By \eqref{pmach-formula-X-1} and \eqref{pmach-formula-S-double-derivative}, we have at $(p,t_0)$
\begin{equation}
\label{pmach-proposition-C2-inequality-max-2}
\begin{aligned}
	&\; \sum_l S_{\alpha -1;l} (X^{l\bar l})^2\left( w \bar\partial_l\partial_l \phi - w^{-1} |\partial_l w|^2 + \sum_{i}  X_{i\bar il\bar l} \right) \\
     	\geq&\;  w\sum_i S_{\a -1;i} (X^{i\bar i})^2 \bpartial_i\p_i \phi -  w^{-1}\sum_{i} S_{\a -1;i} (X^{i\bar i})^2 |\p_i w|^2 \\
     	&\; + \sum_{i,j} S_{\a -1;i} (X^{i\bar i})^2 \Big( - R_{j\bar ji\bar i}X_{i\bar i} + R_{i\bar ij\bar j}X_{j\bar j}  + G_{i\bar ij\bar j}\Big) \\
     	&\; - 2\sum_{i,j,l} S_{\a -1;i}(X^{i\bar i})^2  \fRe\{\ol{T^j_{il}}X_{i\bar j l}\} + \sum_{i,j,l} S_{\a -1;i} (X^{i\bar i})^2  T^j_{il} \ol{T^j_{il}}X_{j\bar j} \\
    	&\; + \sum_{i,j,l} S_{\a -1;i} (X^{i\bar i})^2 X^{j\bar j} X_{j\bar i\bar l} X_{i\bar jl} + S_\a \sum_i \p_t\bpartial_i\p_i u - C S_\a.
\end{aligned}
\end{equation}
As in \cite{Sun2013e}, we can simplify equation~\eqref{pmach-proposition-C2-inequality-max-2},
\begin{equation}
\label{pmach-proposition-C2-inequality-max-2-1}
\begin{aligned}
	&\; \sum_l S_{\alpha -1;l} (X^{l\bar l})^2\left( w \bar\partial_l\partial_l \phi - w^{-1} |\partial_l w|^2 + \sum_{i}  X_{i\bar il\bar l} \right) \\
    	\geq&\; S_\a \sum_i \p_t\bpartial_i\p_i u - C S_\a + \sum_{i,j} S_{\a -1;i} (X^{i\bar i})^2 \Big( - R_{j\bar ji\bar i}X_{i\bar i} + R_{i\bar ij\bar j}X_{j\bar j}  + G_{i\bar ij\bar j}\Big) \\
    	&\; + w\sum_i S_{\a -1;i} (X^{i\bar i})^2 \bpartial_i\p_i \phi + \frac{2}{w} \sum_{i,j} S_{\a - 1;i} (X^{i\bar i})^2 \mathfrak{Re}\{ \hat{T}^k_{ji} \chi_{k\bar j} {\bpartial_i w}\}  
\end{aligned}
\end{equation}
where $\hat{T}$ denotes the torsion with respect to the Hermitian metric $\chi$. In \cite{Sun2013e}, we just simply threw away the last term.

Now we apply a trick due to Phong and Sturm \cite{PhongSturm10} and use the following function
\begin{equation}
\label{pmach-proposition-C2-test-function}
    	\phi := - A ( u - \ul u) + \frac{1}{u - \ul u - \inf_{M_t} (u - \ul u) + 1} = - A ( u - \ul u) + E_1  .
\end{equation}
Without loss of genelarity, we can assume $A \gg 1$ throughout this paper. 

It is easy to see that
\begin{equation}
\label{pmach-proposition-C2-test-function-t-derivative}
	\p_t \phi = - A  \p_t u - \p_t u E^2_1 ,
\end{equation}
\begin{equation}
\label{pmach-proposition-C2-test-function-first-derivative}
	\p_i \phi = - A ( \p_i u - \p_i \ul u) - {(\p_i u - \p_i\ul u)}{E^2_1} 
\end{equation}
and
\begin{equation}
\label{pmach-proposition-C2-test-function-second-derivative}
\begin{aligned}
    	\bpartial_i \p_i \phi =& - A (\bpartial_i\p_i u - \bpartial_i\p_i \ul u) -  {(\bpartial_i\p_i u - \bpartial_i\p_i \ul u)}{E^2_1} + {2 |\p_i(u - \ul u)|^2}{E^3_1} .  
\end{aligned}
\end{equation}
The fourth term in \eqref{pmach-proposition-C2-inequality-max-2-1} turns to 
\begin{equation}
\begin{aligned}
     	 w \sum_i S_{\a -1;i} (X^{i\bar i})^2 \bpartial_i\p_i \phi =&\, 2 w \sum_i S_{\a -1;i} (X^{i\bar i})^2 { |\p_i(u - \ul u)|^2}{E^3_1} \\
     	 &- \Big( Aw + {w}{E^2_1}  \Big)\sum_i S_{\a -1;i} (X^{i\bar i})^2(\bpartial_i\p_i u-  \bpartial_i\p_i \ul u)    ;
\end{aligned}
\end{equation}
and the fifth term is
\begin{equation}
\begin{aligned}
    	 \frac{2}{w} \sum_{i,j} S_{\a - 1;i} &(X^{i\bar i})^2 \mathfrak{Re}\{ \hat{T}^k_{ji} \chi_{k\bar j} {\bpartial_i w}\}   =  - 2  \sum_{i,j} S_{\a - 1;i} (X^{i\bar i})^2 \mathfrak{Re}\{\hat{T}^k_{ji} \chi_{k\bar j} {\bpartial_i \phi}\} \\
    	\geq&\, - {w}{E^3_1} \sum_i S_{\a -1;i} (X^{i\bar i})^2 |\p_i (u - \ul u)|^2-  \frac{C A^2}{w E^3_1} \sum_i S_{\a -1;i} (X^{i\bar i})^2 .
\end{aligned}
\end{equation}
Therefore,
\begin{equation}
\label{pmach-proposition-C2-inequality-max-2-2}
\begin{aligned}
	&\; S^{-1}_\a \sum_l S_{\alpha -1;l} (X^{l\bar l})^2\left( w \bar\partial_l\partial_l \phi - w^{-1} |\partial_l w|^2 + \sum_{i}  X_{i\bar il\bar l} \right) \\
    	\geq& \left( A + {E^2_1} \right) w S^{-1}_\a \sum_i S_{\a -1;i} (X^{i\bar i})^2(\bpartial_i\p_i \ul u - \bpartial_i\p_i  u) - \frac{C A^2 }{w E^3_1} S^{-1}_\a\sum_i S_{\a -1;i} (X^{i\bar i})^2 \\
	& + S^{-1}_\a\sum_{i,j} S_{\a -1;i} (X^{i\bar i})^2 \Big( - R_{j\bar ji\bar i}X_{i\bar i} + R_{i\bar ij\bar j}X_{j\bar j}  + G_{i\bar ij\bar j}\Big) + \sum_i \p_t\bpartial_i\p_i u - C  .
\end{aligned}
\end{equation}
Note also that
\begin{equation}
\label{pmach-proposition-C2-inequality-max-1-1}
\begin{aligned}
	w\partial_t \phi + \partial_t w 
	&= - (A + E^2_1 ) w\ln \left(\frac{C^\a_n}{\psi S_\a}\right) +  \sum_i \bar\partial_i \partial_i \partial_t u .
\end{aligned}
\end{equation}
Combing \eqref{pmach-proposition-C2-inequality-max-1}, \eqref{pmach-proposition-C2-inequality-max-2-2} and \eqref{pmach-proposition-C2-inequality-max-1-1},
\begin{equation}
\begin{aligned}
	0 \leq&\,    - \left( A + E^2_1 \right) w \left( \ln\left(\frac{C^\a_n}{\psi S_\a}\right) + S^{-1}_\a \sum_i S_{\a -1;i} (X^{i\bar i})^2(\bpartial_i\p_i \ul u - \bpartial_i\p_i  u) \right) \\
	& + \frac{C A^2 }{w E^3_1} S^{-1}_\a \sum_i S_{\a -1;i} (X^{i\bar i})^2   + C (w + 1 )\ S^{-1}_\a \sum_{i,j} S_{\a -1;i} (X^{i\bar i})^2 + C .
\end{aligned}
\end{equation}

For $A \gg 1$ which is to be determined later, there are two cases in consideration:
(1) $w > A(u  - \ul u - \inf_{M_t}(u - \ul u) + 1)^{\frac{3}{2}} \geq A > N$, where $N$ is the crucial constant in Therorem~\ref{pmach-theorem-alternative};
(2) $w \leq A(u  - \ul u - \inf_{M_t}(u - \ul u) + 1)^{\frac{3}{2}}$ .

In the first case, by Theorem~\ref{pmach-theorem-alternative},
\begin{equation}
\begin{aligned}
 	&\, \ln \left(\frac{C^\a_n}{\psi S_\a}\right) + S^{-1}_\a \sum_i S_{\a -1;i} (X^{i\bar i})^2(\bpartial_i\p_i \ul u - \bpartial_i\p_i  u) \\
	\geq&\,  \ln \left(\frac{C^\a_n}{\psi S_\a}\right) + (1+\theta) \alpha \Big(\frac{\psi S_\a}{C^\alpha_n}\Big)^{\frac{1}{\alpha}} - \a \\
	\geq&\, \theta \alpha \left(\frac{\psi S_\a}{C^\alpha_n}\right)^{\frac{1}{\alpha}} 
\end{aligned}
\end{equation}
by observing the simple fact
\begin{equation}
    \ln x \leq x - 1, \qquad \forall x > 0.
\end{equation}
Alloting the extra positive $\theta$ properly, we have for $\delta > 0$ small enough,
\begin{equation}
	0 \leq   - A w \delta\left( 1 + S^{-1}_\a \sum_i S_{\a -1;i} (X^{i\bar i})^2 \right)   + C w \ S^{-1}_\a \sum_{i,j} S_{\a -1;i} (X^{i\bar i})^2 + C .
\end{equation}
This gives a bound $w \leq 1$ at $p$ if we pick a sufficiently large $A$, which contradicts the assumption $A \gg 1$.

In the second case, 
\begin{equation}
\begin{aligned}
    w e^\phi \leq w e^\phi |_p 
    &\leq A\left(u  - \ul u - \inf_{M_t}(u - \ul u) + 1\right)^{\frac{3}{2}} e^{ - A ( u - \ul u) + 1} \\
    &\leq A e^2 e^{- A  \inf_{M_t}(u - \ul u) } 
\end{aligned}
\end{equation}
and hence
\begin{equation}
\begin{aligned}
    	w &\leq A e^2 e^{ A ( u - \ul u) - E_1 - A  \inf_{M_t}(u - \ul u) } \\
	&\leq A e^2 e^{ A ( u - \ul u) - A  \inf_{M_t}(u - \ul u) } \leq C e^{A (u - \inf_{M_t} u)} .
\end{aligned}
\end{equation}

\end{proof}

The Evans-Krylov theory and Schauder estimates are quite standard, so we omit the proofs here. We refer readers to \cite{GL10}, \cite{TWv10a}, \cite{Sun2013e}, \cite{Gill11} and \cite{Sun2013p}.

\bigskip

\section{Long time existence and uniform estimates}
\label{pmach-long}
\setcounter{equation}{0}
\medskip

Since $\frac{\p u}{\p t}$ is bounded, in finite time we have $C^0$ estimate dependent on time $t$. By the Evans-Krylov theory and Schauder estimates, we can obtain time-dependent $C^\infty$ estimates. With a standard argument,  theses estimates are sufficient for us to prove the long time existence.

Nevertheless, in order to prove the convergence, we need uniform $C^0$ estimate, which is independent from the time $t$.

\begin{theorem}
\label{pmach-long-theorem-hermitian}
Under the assumption of Theorem~\ref{pmach-int-thm-main}, there exists a uniform constant $C$ depeding only on the initial geometric data such that
\begin{equation}
	\sup_M u(x,t) - \inf_M u(x,t) \leq C,
\end{equation}
given that
\begin{equation}
\label{pmach-long-theorem-hermitian-condition}
    \frac{\chi^n}{\chi^{n - \a} \wedge \omega^\a}\leq \psi .
\end{equation}
\end{theorem}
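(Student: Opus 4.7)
The plan is to combine pointwise time-monotonicity of $u$ coming from the subsolution hypothesis~\eqref{pmach-long-theorem-hermitian-condition} with the improved $C^2$ estimate from Proposition~\ref{pmach-proposition-C2}, and to treat each time slice as an elliptic Monge--Amp\`ere type problem governed by the cone condition.

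First I would show $\partial_t u\le 0$ on $M\times[0,\infty)$. Differentiating equation~\eqref{pmach-parabolic-equation} in $t$ produces a linear parabolic equation $\partial_t v=Lv$, with no zeroth-order term, for $v:=\partial_t u$, whose principal symbol is determined by $\chi_u^{-1}$ (and the analogous operator from the $n-\alpha$ factor). The parabolic maximum principle then gives $\max_{M\times[0,t]}v=\max_{M\times\{0\}}v$, and hypothesis~\eqref{pmach-long-theorem-hermitian-condition} is exactly the statement that $v|_{t=0}\le 0$. Hence $\partial_t u\le 0$ for all $t$, so $u(\cdot,t)\le u(\cdot,0)=0$ and $u$ is non-increasing in $t$; together with the bound $|\partial_t u|\le C_0:=\sup_{M\times\{0\}}|\partial_t u|$ already recorded in Section~\ref{pmach-pre}, one has $-C_0\le\partial_t u\le 0$ uniformly.

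Next I would read equation~\eqref{pmach-parabolic-equation} at each fixed time as the elliptic identity
\[
\chi_u^n=\psi'(\cdot,t)\,\chi_u^{n-\alpha}\wedge\omega^\alpha,\qquad \psi'(x,t):=e^{\partial_t u(x,t)}\psi(x).
\]
Since $-C_0\le\partial_t u\le 0$, we have $e^{-C_0}\psi\le\psi'\le\psi$, so $\psi'$ is a family of smooth positive functions with uniform two-sided bounds. Because $\psi'\le\psi$, the cone condition is inherited: $\chi\in\mathscr{C}_\alpha(\psi)\subseteq\mathscr{C}_\alpha(\psi')$. Every time slice $u(\cdot,t)$ is thus an admissible solution of an elliptic equation of the form~\eqref{pmach-elliptic-equation} with data controlled uniformly in $t$, and one can apply the elliptic oscillation estimate under the cone condition established in \cite{Sun2013e} to conclude $\sup_M u(\cdot,t)-\inf_M u(\cdot,t)\le C$ with $C$ depending only on geometric data, $\psi$, and $C_0$.

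The main obstacle is making the estimate genuinely uniform. If a direct appeal to \cite{Sun2013e} obscures the dependence of the constant on the (now time-varying) right-hand side, I would instead reproduce the argument in the parabolic setting using Proposition~\ref{pmach-proposition-C2}. Setting $w:=u-\inf_M u(\cdot,t)\ge 0$, the $C^2$ estimate gives $\Delta w\le Ce^{Aw}$; testing with $e^{Bw}$ and integrating by parts on $(M,\omega)$, with the Hermitian torsion terms handled as in Tosatti--Weinkove's treatment of the complex Monge--Amp\`ere equation, yields an improving sequence of $L^p$ bounds. Bootstrapping to $L^\infty$ requires an initial $L^1$-bound on $w$, which on a general Hermitian manifold — where one does not have the K\"ahler identity $\int\chi_u^n=\int\chi^n$ — is the crux of the argument; here hypothesis~\eqref{pmach-long-theorem-hermitian-condition} together with $\partial_t u\le 0$ should be used to compare $\int\chi_u^n$ with $\int\chi_u^{n-\alpha}\wedge\omega^\alpha$ and thereby supply the missing integral control.
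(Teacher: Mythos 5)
Your opening step is exactly the paper's: hypothesis~\eqref{pmach-long-theorem-hermitian-condition} says $\partial_t u\le 0$ at $t=0$, and the parabolic maximum principle propagates this, so $\partial_t u\le 0$ for all $t$. From there, though, the two approaches diverge and yours has a gap.

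Your primary route --- freeze time and view each slice as an elliptic equation with $\psi'=e^{\partial_t u}\psi$ --- is appealing but does not close. The cone condition is indeed monotone ($\psi'\le\psi$ forces $\mathscr{C}_\alpha(\psi)\subseteq\mathscr{C}_\alpha(\psi')$), and $\psi'$ has uniform two-sided $C^0$ bounds. But the oscillation estimate in \cite{Sun2013e} is obtained \emph{through} the elliptic second-order estimate, whose constants depend on derivatives of the right-hand side. Here $\psi'=e^{\partial_t u}\psi$ contains $\partial_t u$, whose spatial derivatives are not controlled a priori at this stage of the argument, so ``data controlled uniformly in $t$'' is not available in the form that theorem needs. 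The paper sidesteps this entirely by never freezing time: it uses the \emph{parabolic} $C^2$ estimate of Proposition~\ref{pmach-proposition-C2}, whose constants depend only on the fixed geometric data $(\chi,\omega,\psi,\ul u)$.

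The missing idea --- and it is the whole point of the paper's short proof --- is the following consequence of $\partial_t u\le 0$: both $\sup_M u(\cdot,t)$ and $\inf_M u(\cdot,t)$ are nonincreasing, so
\[
\inf_{M\times[0,t]} u \;=\; \inf_M u(\cdot,t).
\]
Plugging this into Proposition~\ref{pmach-proposition-C2} turns the space-time bound into the genuinely \emph{sharp} slice-wise estimate
\[
\Delta u(\cdot,t)+\operatorname{tr}\chi \;\le\; C\,e^{A\,(u(\cdot,t)-\inf_M u(\cdot,t))},
\]
with $C,A$ independent of $t$. At that point one can invoke the Cherrier/Tosatti--Weinkove machinery of \cite{TWv10a} as a black box to conclude $\sup_M u(\cdot,t)-\inf_M u(\cdot,t)\le C$. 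Your backup route is essentially this, but two remarks: (i) you wrote $w=u-\inf_M u(\cdot,t)$ and claimed $\Delta w\le Ce^{Aw}$ from Proposition~\ref{pmach-proposition-C2} without justifying the replacement of $\inf_{M\times[0,t]}u$ by $\inf_M u(\cdot,t)$ --- that replacement is exactly the monotonicity observation above and should be stated; (ii) the separate worry about supplying an initial $L^1$ bound via comparing $\int\chi_u^n$ and $\int\chi_u^{n-\alpha}\wedge\omega^\alpha$ is a red herring: the iteration in \cite{TWv10a} produces the needed $L^p$ control internally from the exponential $C^2$ bound and admissibility ($\Delta u\ge -\operatorname{tr}\chi$), with no volume comparison required. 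Finally, the paper phrases the conclusion as a contradiction argument along a sequence $t_i\to\infty$, but that packaging is inessential; the substance is the two facts above.
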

\begin{proof}

Following \cite{Weinkove06}, we prove the theorem by contradiction. If such a bound does not exist, we can choose a sequence of time points $t_i \rightarrow \infty$ such that
\begin{equation}
	\sup_M u(x,t_i) - \inf_M u(x,t_i) \rightarrow \infty,
\end{equation}

Note that
\begin{equation}
	\frac{\p u}{\p t} \leq 0
\end{equation}
by \eqref{pmach-long-theorem-hermitian-condition} and the maximum principle. Thus for $t > s \geq 0$,
\begin{equation}
\label{pmach-long-theorem-hermitian-decreasing-sup}
	\sup_M u(x , t) \leq \sup_M u(x , s)
\end{equation}
and
\begin{equation}
\label{pmach-long-theorem-hermitian-decreasing-inf}
	\inf_M u(x , t) \leq \inf_M u(x , s) .
\end{equation}
So
\begin{equation}
	\inf_M u(x , t_i) = \inf_{t\in [0,t_i]}\inf_M u(x , t) \rightarrow - \infty.
\end{equation}

By Proposition~\ref{pmach-proposition-C2},
\begin{equation}
\begin{aligned}
 	w(x, t_i) &\leq C e^{A \left( u(x,t_i) - \inf_{M\times [0,t_i]} u \right)} \\
	& = C e^{A \left( u(x,t_i) - \inf_{M}  u(x,t_i)\right)} .
\end{aligned}
\end{equation}
As shown in \cite{TWv10a}, the sharp $C^2$ estimate implies that
\begin{equation}
	\sup_M u(x,t_i) - \inf_M u(x,t_i) \leq C ,
\end{equation}
for some positive constant $C$, which is a contradiction.

\end{proof}

As a consequence, we also obtain uniform the $C^2$ estimate. By \eqref{pmach-long-theorem-hermitian-decreasing-inf}, it must be 
\begin{equation}
	 w(x,t) \leq C e^{A (u(x,t) - \inf_{M} u(x,t))} \leq  C e^{A (\sup_M u(x,t) - \inf_{M} u(x,t))} .
\end{equation}

Should we have more knowledge of the two metrics, there would be chances to obtain deeper results.
\begin{theorem}
\label{pmach-long-theorem-kahler}
Under the assumption of Theorem~\ref{pmach-int-theorem-convergence-Kahler}, there exists a uniform constant $C$ depeding only on the initial geometric data such that
\begin{equation}
	\sup_M u(x,t) - \inf_M u(x,t) \leq C,
\end{equation}
given that
\begin{equation}
\label{pmach-long-theorem-kahler-condition}
    \psi \geq c,
\end{equation}
where $c$ is a well known invariant defined by \eqref{pmach-kahler-constant}.
\end{theorem}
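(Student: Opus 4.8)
The plan is to adapt the contradiction scheme from the proof of Theorem~\ref{pmach-long-theorem-hermitian}, but to replace the use of the sign condition $\frac{\chi^n}{\chi^{n-\a}\wedge\omega^\a}\leq\psi$ (which fails in the K\"ahler setting) with the boundedness of the Chen $J$-functional along the flow. First I would recall that when $\chi$ and $\omega$ are both K\"ahler, the K\"ahler class $[\chi]$ is fixed along the flow and the invariant $c$ in \eqref{pmach-kahler-constant} is well defined. The condition $\psi\geq c$ is exactly what makes the relevant Moser--Trudinger / $\alpha$-invariant type estimate work: one wants to show $\sup_M u - \inf_M u \leq C$ uniformly, and the standard route (following \cite{Weinkove06}, \cite{SW08}) is to bound the oscillation of $\tilde u$ by combining (i) a uniform upper bound on the $J$-functional evaluated at $\tilde u(\cdot,t)$, and (ii) the $C^2$ estimate of Proposition~\ref{pmach-proposition-C2}, which gives $\chi_u \geq -C\omega + \text{(positive)}$ so that $\tilde u$ is an almost-$\omega$-plurisubharmonic function with controlled Laplacian.

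The key steps, in order, would be: (1) Argue by contradiction as in Theorem~\ref{pmach-long-theorem-hermitian}: suppose $t_i\to\infty$ with $\sup_M u(\cdot,t_i)-\inf_M u(\cdot,t_i)\to\infty$. (2) Show that along the flow the functional $I_\chi(\tilde u) - J_\chi(\tilde u)$ (or the appropriate $J$-type energy attached to equation~\eqref{pmach-parabolic-equation}) is monotone or at least uniformly bounded above; here is where $\psi\geq c$ enters, via the computation $\frac{d}{dt}(\text{energy}) = \int_M \partial_t u\,(\text{volume form combinations})$ and the arithmetic–geometric type inequality relating $\chi_u^n$, $\chi_u^{n-\a}\wedge\omega^\a$, and the constant $c$. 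Because $|\partial_t u|$ is uniformly bounded (as noted in the Preliminaries via the maximum principle) and $\tilde u$ has mean zero, this controls the energy of $\tilde u(\cdot,t)$ uniformly. (3) Invoke the uniform $C^2$ bound $w\leq Ce^{A(\sup_M u-\inf_M u)}$ from Proposition~\ref{pmach-proposition-C2}; combined with the energy bound and a Green's function / Sobolev argument exactly as in \cite{Weinkove06}, \cite{SW08}, \cite{FLM11}, this forces $\sup_M u(\cdot,t_i)-\inf_M u(\cdot,t_i)\leq C$, contradicting step (1).

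The main obstacle is step (2): establishing the uniform upper bound on the $J$-type energy when $\psi$ is merely $\geq c$ rather than equal to $c$. In the classical $J$-flow case $\psi=c$ is a constant and the energy is exactly monotone; for $\psi\geq c$ one must show the flow still decreases (or keeps bounded) the relevant functional, which requires a careful choice of the functional adapted to $\ln S_\a(\chi_u^{-1})$ and an inequality of the form $\int_M (\partial_t u)\,\chi_u^{n-k}\wedge\omega^k \le 0$ or a telescoping estimate using $\int_M \chi_u^j\wedge\omega^{n-j}$ being topological. I would handle this by writing $\partial_t u = \ln C_n^\a - \ln\psi - \ln S_\a(\chi_u^{-1})$, testing against the flow of $\chi_u$, and using $\psi\geq c$ together with the concavity of $\ln S_\a$ and the fact that all mixed intersection numbers $\int_M\chi_u^{n-k}\wedge\omega^k$ are independent of $t$ in the K\"ahler case. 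Once the energy bound is in hand, steps (1) and (3) are routine and follow the cited references verbatim.
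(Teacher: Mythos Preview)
Your overall strategy---monotonicity of a $J$-type functional from $\psi\geq c$, then contradiction via the $C^2$ estimate and a Green's function bound---matches the paper's. But you have misstated the $C^2$ estimate and this hides the actual difficulty. Proposition~\ref{pmach-proposition-C2} gives
\[
w(x,t)\leq Ce^{A\big(u(x,t)-\inf_{M\times[0,t]}u\big)},
\]
with a \emph{space-time} infimum, not $Ce^{A(\sup_M u-\inf_M u)}$. In Theorem~\ref{pmach-long-theorem-hermitian} the hypothesis forces $\p_t u\leq 0$, so $\inf_{M\times[0,t]}u=\inf_M u(\cdot,t)$ and the Tosatti--Weinkove argument applies at every $t$. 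In the K\"ahler case $\p_t u$ has no sign, so picking $t_i$ merely with $\mathrm{osc}\,u(\cdot,t_i)\to\infty$, as you do in step~(1), does \emph{not} let you feed the estimate into \cite{TWv10a}. This is the genuine gap.

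The paper closes it as follows. First, one normalizes by the $J_\a$-functional, setting $\hat u=u-J_\a(u)/\int_M\chi^{n-\a}\wedge\omega^\a$ (not the mean-zero $\tilde u$ you use); a direct computation along the straight-line path gives $J_\a(\hat u)\equiv 0$, and from this Lemma~\ref{pmach-long-lemma-1} yields $0\leq\sup_M\hat u\leq -C_1\inf_M\hat u+C_2$ via the Green's function bound. Second---and this is the step you are missing---one shows by Jensen's inequality and $\psi\geq c$ that $t\mapsto J_\a(u(t))$ is nonincreasing. One then chooses $t_i$ to realize the running minimum of $\inf_M\hat u$; since $u=\hat u+J_\a(u)/\mathrm{const}$ and $J_\a(u)$ is monotone, this forces $\inf_M u(\cdot,t_i)=\inf_{M\times[0,t_i]}u$ as well. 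Only now does Proposition~\ref{pmach-proposition-C2} collapse to $w(\cdot,t_i)\leq Ce^{A(u(\cdot,t_i)-\inf_M u(\cdot,t_i))}$, after which \cite{TWv10a} bounds the oscillation and contradicts $\sup_M\hat u\geq 0$. Your references to an $I-J$ or Moser--Trudinger estimate are unnecessary; the argument is purely the $J_\a$ normalization plus the careful choice of $t_i$.
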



First of all, we need to extend the definition of $J$-functional~\cite{Chen00b} , which was done in \cite{FLM11}. Let $\mathcal{H}$ be the space of K\"ahler potentials
\begin{equation}
	\mathcal{H} := \{ u\in C^\infty(M) \;|\; \chi_u \in [\chi]^+\} .
\end{equation}
For any curve $v(s) \in \mathcal{H}$, we define the funtional $J_\a$ by
\begin{equation}
\label{pmach-J-functional-defition-derivative}
	\frac{d J_\a}{d s} = \int_M \frac{\p v}{\p s} \chi^{n - \a}_v \wedge \omega^\a .
\end{equation}
Then we have a formula for $J_\a$ of function $u$,
\begin{equation}
\label{pmach-J-functional-defition-formula}
J_\a (u) = \int^1_0 \int_M \frac{\p v}{\p s} \chi^{n - \a}_v \wedge \omega^\a ds,
\end{equation}
where $v(s)$ is an arbitrary path in $\mathcal{H}$ connecting $0$ and $u$. It is straighforward to verify that the functional is independent on the path. So we can restrict the integration to the straight line $v(s) = s u$ to calculate $J_\a(u)$  for any $u \in \mathcal{H}$.
\begin{equation}
\label{pmach-J-functional-calculation-line}
\begin{aligned}
	J_\a (u) &= \int^1_0 \int_M u \chi^{n - \a}_{s u}\wedge \omega^\a ds \\
	&= \int^1_0 \int_M u \left(s \chi_u + (1 - s)\chi\right)^{n - \a} \wedge \omega^\a ds \\
	&= \int^1_0 \int_M u \sum^{n - \a}_{i = 0} C^i_{n - \a} s^i (1 - s)^{n - \a - i} \chi^i_u \wedge \chi^{n - \a - i} \wedge \omega^\a ds \\
	&= \sum^{n - \a}_{i = 0} C^i_{n - \a} \int^1_0 s^i (1 - s)^{n - \a - i} ds \int_M u \chi^i_u \wedge \chi^{n - \a - i} \omega^\a \\
	&= \frac{1}{n - \a + 1} \sum^{n - \a}_{i = 0}  \int_M u \chi^i_u \wedge \chi^{n - \a - i} \omega^\a .
\end{aligned}
\end{equation}
Also, along the solution flow $u(x,t)$ to equation~\eqref{pmach-parabolic-equation},
\begin{equation}
\label{pmach-long-theorem-kahler-decreasing-J-functional}
\begin{aligned}
	\frac{d}{dt} J_\a (u) &= \int_M \frac{\p u}{\p t} \chi^{n - \a}_u \wedge \omega^\a \\
	&\leq \int_M \left(\ln \frac{\chi^n_u}{\chi^{n - \a}_u \wedge \omega^\a} - \ln\psi \right) \chi^{n - \a}_u \wedge \omega^\a \\
	&\leq \ln c\int_M \chi^{n - \a}_u \wedge \omega^\a  - \int_M \ln \psi \chi^{n - \a}_u \wedge \omega^\a \\
	&\leq 0 .
\end{aligned}
\end{equation}
The second inequality follows from Jesen's inequality. Thus, $J_\a (u)$ is decreasing and nonpositive along the solution flow.

Now we consider an arbitrary function flow $u(x,t)$ starting from $0$. Computing $J_\a$ on the flow, that is $v(x,s) = u(x,sT)$, it follows that
\begin{equation}
\label{pmach-J-functional-calculation-flow}
\begin{aligned}
	J_\a(u(T)) &= \int^1_0 \int_M T \frac{\p u}{\p t} (sT) \chi^{n - \a}_{u(sT)} \wedge \omega^\a ds \\
	&= \int^T_0 \int_M \frac{\p u}{\p t} \chi^{n - \a}_u \wedge \omega^\a dt \\
	&= \int^T_0 \frac{d J_\a}{d t} dt .
\end{aligned}
\end{equation}
For the solution flow $u(x,t)$ to equation~\eqref{pmach-parabolic-equation}, let 
\begin{equation}
\label{pmach-time-normalized-solution-defintion}
\hat u = u - \frac{J_\a (u)}{\int_M \chi^{n - \a}\wedge \omega^\a} .
\end{equation}

\begin{lemma}
\label{pmach-long-lemma-1}
\begin{equation}
\label{pmach-long-lemma-1-inequality}
	0 \leq \sup_M \hat u(x , t) \leq - C_1 \inf_M \hat u(x , t) + C_2 .
\end{equation}
\end{lemma}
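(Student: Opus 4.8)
The plan is to recognise the constant $\bar c := J_\a(u)/\!\int_M \chi^{n-\a}\wedge\o^\a$ by which $\hat u$ is normalised as a convex combination of averages of $u$ against probability measures, and then to play the resulting two–sided bound $\inf_M u \leq \bar c \leq \sup_M u$ against the classical $L^1$ estimate for $\chi$-plurisubharmonic functions. Write $V_\chi := \int_M \chi^{n-\a}\wedge\o^\a$. For the left inequality $0\leq\sup_M\hat u$: since $M$ and $\chi$ are K\"ahler, $\o^\a$ and every power of $\chi$ are closed, while $\chi_u-\chi=\frac{\sqrt{-1}}{2}\p\bpartial u$ is exact; hence for each $0\leq i\leq n-\a$ the form $\chi_u^i\wedge\chi^{n-\a-i}\wedge\o^\a$ is cohomologous to $\chi^{n-\a}\wedge\o^\a$ and so integrates to $V_\chi$. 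These being nonnegative $(n,n)$-forms, \eqref{pmach-J-functional-calculation-line} exhibits
\[
\bar c \;=\; \frac{1}{(n-\a+1)V_\chi}\sum_{i=0}^{n-\a}\int_M u\,\chi_u^i\wedge\chi^{n-\a-i}\wedge\o^\a
\]
as the average of $n-\a+1$ integrals of $u$ against probability measures, whence $\inf_M u\leq\bar c\leq\sup_M u$, i.e.\ $\inf_M\hat u\leq 0\leq\sup_M\hat u$.

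For the right inequality I would first discard all but the $i=0$ summand above (each summand is nonnegative once $\inf_M u$ is subtracted), obtaining
\[
\bar c - \inf_M u \;\geq\; \frac{1}{(n-\a+1)V_\chi}\int_M (u-\inf_M u)\,\chi^{n-\a}\wedge\o^\a .
\]
Because $\chi_u>0$ with $\chi$ K\"ahler, $u$ is $\chi$-plurisubharmonic, so by the standard mean-value/compactness estimate for such functions (which follows from the sub-mean-value property of plurisubharmonic functions via a covering argument; cf.\ \cite{TWv10a}) there is a constant $C_3$ depending only on $(M,\o,\chi)$ with $\sup_M u - V_\chi^{-1}\!\int_M u\,\chi^{n-\a}\wedge\o^\a \leq C_3$. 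Subtracting $\inf_M u$ from both sides and inserting the previous inequality gives $\sup_M u - \inf_M u \leq C_3 + (n-\a+1)(\bar c - \inf_M u)$, and therefore
\[
\sup_M\hat u \;=\; (\sup_M u - \inf_M u) - (\bar c - \inf_M u) \;\leq\; C_3 + (n-\a)(\bar c - \inf_M u) \;=\; -(n-\a)\inf_M\hat u + C_3,
\]
which is \eqref{pmach-long-lemma-1-inequality} with $C_1=n-\a$ and $C_2=C_3$; both constants depend only on the fixed geometric data, so the bound is uniform in $t$.

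The only genuinely analytic ingredient is the $L^1$ (sub-mean-value) bound for $\chi$-plurisubharmonic functions, which is classical; everything else is bookkeeping. The step that deserves care is the cohomological identity that turns $\bar c$ into a genuine convex combination — this is precisely where the K\"ahler hypothesis of Theorem~\ref{pmach-int-theorem-convergence-Kahler} is used, since closedness of $\chi^j$ and $\o^\a$ is what allows us to equate all the masses $\int_M \chi_u^i\wedge\chi^{n-\a-i}\wedge\o^\a$.
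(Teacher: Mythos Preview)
Your proof is correct and follows essentially the same route as the paper's: both rely on the K\"ahler hypothesis to equate the masses $\int_M \chi_u^i\wedge\chi^{n-\alpha-i}\wedge\omega^\alpha = V_\chi$, discard the $i\geq 1$ summands in \eqref{pmach-J-functional-calculation-line}, and finish with the Green's-function (equivalently, sub-mean-value) $L^1$-bound for $\chi$-plurisubharmonic functions. The only cosmetic difference is that the paper first establishes $J_\alpha(\hat u)=0$ via the flow formula \eqref{pmach-J-functional-calculation-flow} and routes the final estimate through $\int_M\hat u\,\omega^n$ rather than through $\bar c-\inf_M u$ as you do.
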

\begin{proof}

We shall use the functional $J_\a$. By \eqref{pmach-J-functional-calculation-flow},
\begin{equation}
\label{pmach-long-lemma-1-J-flow}
\begin{aligned}
	J_\a (\hat u(T)) &= \int^T_0 \int_M \frac{\p \hat u}{\p t} \chi^{n - \a}_u \wedge \omega^\a dt \\
	&= \int^T_0 \int_M \left(\frac{\p u}{\p t} - \frac{1}{\int_M \chi^{n - \a}\wedge\omega^\a}\frac{d J_\a}{dt}\right) \chi^{n-\a}_u \wedge \omega^\a dt \\
	&= 0
\end{aligned}
\end{equation}
But according to \eqref{pmach-J-functional-calculation-line}, we have
\begin{equation}
\label{pmach-long-lemma-1-J-line}
	\frac{1}{n - \a + 1} \sum^{n - \a}_{i = 0}  \int_M \hat u \chi^i_{\hat u} \wedge \chi^{n - \a - i}  \wedge \omega^\a = 0. 
\end{equation}
The first inequality in \eqref{pmach-long-lemma-1-inequality} then follows from \eqref{pmach-long-lemma-1-J-line}.

Rewriting \eqref{pmach-long-lemma-1-J-line},
\begin{equation}
	  \int_M \hat u  \chi^{n - \a }  \wedge \omega^\a = - \sum^{n - \a}_{i = 1}  \int_M \hat u \chi^i_{\hat u} \wedge \chi^{n - \a - i}  \wedge \omega^\a. 
\end{equation}
Let $C_1$ be a positive constant such that
\begin{equation}
	\omega^n \leq C_1 \chi^{n - \a} \wedge \omega^\a .
\end{equation}
Then
\begin{equation}
\label{pmach-long-lemma-1-inequality-1}
\begin{aligned}
	\int_M \hat u \omega^n &= \int_M \left(\hat u - \inf_M \hat u\right) \omega^n + \int_M \inf_M \hat u \omega^n \\
	&\leq C_1 \int_M \left(\hat u - \inf_M \hat u\right) \chi^{n - \a} \wedge \omega^\a + \inf_M \hat u \int_M \omega^n \\
	&\leq - C_1 \sum^{n - \a}_{i = 1}  \int_M \hat u \chi^i_{\hat u} \wedge \chi^{n - \a - i}  \wedge \omega^\a \\
	&\hspace{5em}+ \inf_M \hat u \left(\int_M \omega^n - C_1 \int_M \chi^{n - \a}\wedge\omega^\a\right) \\
	&= - C_1 \sum^{n - \a}_{i = 1}  \int_M \left(\hat u - \inf_M \hat u\right) \chi^i_{\hat u} \wedge \chi^{n - \a - i}  \wedge \omega^\a \\
	&\hspace{5em}+ \inf_M \hat u \left(\int_M \omega^n -  (n - \a + 1)C_1 \int_M  \chi^{n - \a }  \wedge \omega^\a\right)  \\
	&\leq \inf_M \hat u \left(\int_M \omega^n -  (n - \a + 1)C_1 \int_M  \chi^{n - \a }  \wedge \omega^\a\right) .
\end{aligned}
\end{equation}
The second inequality \eqref{pmach-long-lemma-1-inequality} then follows from \eqref{pmach-long-lemma-1-inequality-1}, the fact that $\Delta_\omega u > - tr_\omega \chi$ and the lower bound of the Green's function of $\omega$ (see Yau~\cite{Yau78}).

\end{proof}

It remains to prove Theorem~\ref{pmach-long-theorem-kahler}.
\begin{proof}[Proof of Theorem~\ref{pmach-long-theorem-kahler}]
From Lemma~\ref{pmach-long-lemma-1} and the simple fact that
\begin{equation}
\sup_M u(x,t) - \inf_M u(x,t) = \sup_M \hat u(x,t) - \inf_M \hat u(x,t),
\end{equation}
it suffices to prove a lower bound for $\inf_M \hat u(x , t)$. If such a lower bound does not exist, then we can choose a sequence of time points $t_i \rightarrow \infty$ such that
\begin{equation}
\label{pmach-long-theorem-kahler-contradiction-1}
	\inf_M \hat u(x , t_i) = \inf_{t\in[0,t_i]} \inf_M \hat u(x, t)
\end{equation}
and
\begin{equation}
\label{pmach-long-theorem-kahler-contradiction-2}
	\inf_M \hat u(x , t_i) \rightarrow -\infty .
\end{equation}
Then for any $t \leq t_i$,
\begin{equation}
\label{pmach-long-theorem-kahler-decreasing-inf}
	\inf_M u(x , t_i) - \inf_M u(x , t) \leq \frac{J_\a (u(t_i))}{\int_M \chi^{n - \a}\wedge \omega^\a} - \frac{J_\a (u(t))}{\int_M \chi^{n - \a}\wedge \omega^\a} \leq 0 .
\end{equation}
With a similar argument, we also obtain
\begin{equation}
	u (x , t) \leq \hat u(x,t) .
\end{equation}
So we have
\begin{equation}
	\inf_M u(x , t_i) = \inf_{t\in[0,t_i]} \inf_M u(x, t)
\end{equation}
and
\begin{equation}
	\inf_M u(x , t_i) \rightarrow -\infty .
\end{equation}
By Proposition~\ref{pmach-proposition-C2},
\begin{equation}
\begin{aligned}
 	w(x, t_i) &\leq C e^{A \left( u(x,t_i) - \inf_{M\times [0,t_i]} u \right)} \\
	& = C e^{A \left( u(x,t_i) - \inf_{M}  u(x,t_i)\right)} .
\end{aligned}
\end{equation}
As shown in \cite{TWv10a}, the sharp $C^2$ estimate implies that
\begin{equation}
	\sup_M \hat u(x,t_i) - \inf_M \hat u(x,t_i) = \sup_M u(x,t_i) - \inf_M u(x,t_i) \leq C ,
\end{equation}
for some positive constant $C$, which is a contradiction since $\sup_M \hat u(x , t_i)$ is bounded from below by zero.

\end{proof}

For some $t_0 \in [0,t]$, $\inf_M u(x,t_0) = \inf_{M\times[0,t]} u(x,s)$ and hence
\begin{equation}
\begin{aligned}
	 u(x,t) - \inf_{M\times [0,t]} u(x,s) &= u(x,t) - \inf_{M} u(x,t_0) \\
	 &= \hat u(x,t) - \inf_{M} u(x,t_0) + \frac{J_\a (u(t))}{\int_M \chi^{n-\a} \wedge \omega^\a} \\
	 &\leq \hat u(x,t) - \inf_{M} \hat u(x,t_0) .
\end{aligned}
\end{equation}
The proof of Theorem~\ref{pmach-long-theorem-kahler} tells us that $\hat u(x,t)$ is uniformly bounded on $M\times[0,\infty)$. Consequently,
\begin{equation}
	 w(x,t) \leq  C e^{A (\sup_M \hat u(x,t) - \inf_{M} \hat u(x,t_0))} < C .
\end{equation}

Now we should use Evan-Krylov theory and Schauder estimate again to obtain uniform higher order estimates.

\bigskip

\section{The Harnack inequality}
\label{pmach-Harnack}
\setcounter{equation}{0}
\medskip

In this section, we prove the Harnack inequality. The arguments of Gill~\cite{Gill11} can be applied here. For completeness, we include the proof.

Set $F(u) := \frac{S_n (\chi_u)}{S_{n - \a} (\chi_u)}$.  We define a new Hermitian metric corresponding to $u$,
\begin{equation}
\label{pmach-defintion-new-metric}
    	G := F(u)\sum_{i,j} F_{i\bar j}(u) dz^i\wedge d\bar z^j ,
\end{equation}
and thus
\begin{equation}
\left\{
\begin{aligned}
	G_{i\bar j} &= F(u)F_{i\bar j}(u) ,\\
	G^{i\bar j} &= F^{-1}(u) F^{i\bar j},
\end{aligned}
\right.
\end{equation}
where $F^{i\bar j} := \frac{\p F}{\p u_{i\bar j}}$ and $[F_{i\bar j}]_{n\times n}$ is the inverse of $[F^{i\bar j}]_{n\times n}$.

Constructing the Hermitian metric is a key technique in \cite{Sun2013e} to carry out method of continuity, while the metric is quite natural in the study of the parabolic flow, especially when we study the properties of higher order derivatives, such as the Harnack inequality, Evans-Krylov theory and Schauder estimate. 

Let $\varphi$ be a positive function on $M$. We consider the linearized parabolic equation
\begin{equation}
	\p_t\varphi = \sum_{i,j} G^{i\bar j} \bpartial_j\p_i\varphi .
\end{equation}

Define $f = \ln \varphi$ and $H = t(|\p f|_G^2 - \beta \p_t f )$ where $1 < \beta < 2$. Here
\begin{equation}
	|\p f|_G^2 = G(\p f, \bpartial f) =  \sum_{i, j} G^{i\bar j} \p_i f \bpartial_j f .
\end{equation}
We also denote
\begin{equation}
	\left<X , Y\right>_G = G(X , \bar Y) = \sum_{i,j} G^{i\bar j} X_i Y_{\bar j} .
\end{equation}
Then
\begin{equation}
\label{pmach-harnack-equality-f-1}
\begin{aligned}
	&\; \p_t f - \sum_{i,j} G^{i\bar j} \bpartial_j\p_i f \\
	=&\; \frac{\p_t \varphi}{\varphi} - \frac{1}{\varphi} \sum_{i,j} G^{i\bar j} \bpartial_j\p_i \varphi + \frac{1}{\varphi^2} \sum_{i,j} G^{i\bar j} \p_i f \bpartial_j f \\
	=&\; |\p f|_G^2 .
\end{aligned}
\end{equation}

\begin{lemma}
\label{pmach-Harnack-lemma-1}
There are constants $C_1$ and $C_2$ depending only on the bounds of the metric $G$ such that for $t > 0$, we have
\begin{equation}
\begin{aligned}
	\p_t H - \sum_{k,l} G^{k\bar l} \bpartial_l\p_k H &\leq - \frac{t}{2n} (|\p f|^2_G - \p_t f)^2 + 2\mathfrak{Re} \left<\p f , \p H\right>_G \\
	& \hspace{1em} + (|\p f|^2_G - \beta\p_t f) + C_1 t |\p f|^2_G + C_2 t .
\end{aligned}
\end{equation}

\end{lemma}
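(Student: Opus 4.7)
The plan is to follow the standard Li--Yau scheme, adapted to the complex (Chern--Laplacian) setting, using the heat operator $L := \p_t - \sum_{k,l} G^{k\bar l}\bpartial_l\p_k$. Write $Q := |\p f|^2_G - \beta\,\p_t f$ so that $H = tQ$; the product rule gives $LH = Q + t\,LQ$, and the task is to bound $LQ$ from above by the claimed terms divided by $t$. From \eqref{pmach-harnack-equality-f-1} we already know $Lf = |\p f|^2_G$, which will be the input identity differentiated in $t$ and contracted against $\p f$.

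First I would compute $L(|\p f|^2_G)$ by the Hermitian Bochner identity. Expanding $L(G^{i\bar j} f_i f_{\bar j})$ at a point where $G_{i\bar j} = \delta_{ij}$ is diagonal produces the schematic formula
\begin{equation*}
L(|\p f|^2_G) = 2\,\mathfrak{Re}\langle \p f,\p(Lf)\rangle_G - \sum_{i,j} G^{i\bar i} G^{j\bar j}\bigl(|f_{i\bar j}|^2 + |f_{ij}|^2\bigr) + \mathcal{E}_1,
\end{equation*}
where $\mathcal{E}_1$ collects terms involving $\p_t G$, $\p G$, the Chern curvature and torsion of $G$ contracted with $\p f$ or $\nabla^2 f$. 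Since $G$ is controlled in $C^2$ by the uniform estimates proven earlier, all those curvature/torsion factors are bounded, so $\mathcal{E}_1$ is absorbed into terms of the form $C_1 t|\p f|^2_G + C_2 t$ (possibly with a small quadratic Hessian piece absorbed back into the main Hessian square). Similarly, commuting $L$ with $\p_t$ via $[L,\p_t]f = (\p_t G^{k\bar l})f_{k\bar l}$ and using $Lf = |\p f|^2_G$ yields
\begin{equation*}
L(\p_t f) = \p_t|\p f|^2_G + \mathcal{E}_2,
\end{equation*}
with $\mathcal{E}_2$ of the same admissible type.

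Combining, $LQ = L(|\p f|^2_G) - \beta L(\p_t f)$ produces a Hessian quadratic term together with the cross term $2\mathfrak{Re}\langle \p f, \p Lf\rangle_G = 2\mathfrak{Re}\langle \p f, \p|\p f|^2_G\rangle_G$. I rewrite this cross term using $\p H/t = \p Q = \p|\p f|^2_G - \beta\,\p\p_t f$, so that
\begin{equation*}
2\mathfrak{Re}\langle \p f,\p|\p f|^2_G\rangle_G = 2\mathfrak{Re}\langle \p f,\p H/t\rangle_G + 2\beta\,\mathfrak{Re}\langle \p f,\p\p_t f\rangle_G,
\end{equation*}
and the second piece on the right cancels the contribution coming from $\p_t|\p f|^2_G$ up to $\mathcal{E}$-type remainders (this is exactly where $\beta > 1$ is exploited, so that the remaining Hessian quadratic retains a definite sign). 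The surviving good term is the negative Hessian-squared $-\sum G^{i\bar i}G^{j\bar j}|f_{i\bar j}|^2$, which by Cauchy--Schwarz satisfies
\begin{equation*}
\sum_{i,j} G^{i\bar i} G^{j\bar j}|f_{i\bar j}|^2 \geq \tfrac{1}{n}\bigl(\textstyle\sum_i G^{i\bar i} f_{i\bar i}\bigr)^2 = \tfrac{1}{n}(\Delta_G f)^2 = \tfrac{1}{n}\bigl(|\p f|^2_G - \p_t f\bigr)^2,
\end{equation*}
the last identity being \eqref{pmach-harnack-equality-f-1}. Since $\beta < 2$, the coefficient one keeps after absorbing cross terms via Young's inequality is at least $1/2$, which is exactly the $\tfrac{1}{2n}$ in the statement. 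Assembling $LH = Q + t LQ$ and noting $Q = |\p f|^2_G - \beta\p_t f$ gives the claimed inequality.

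The main obstacle will be the bookkeeping of the $\mathcal{E}$-type error terms: because $G$ is defined via $F_{i\bar j}(u)$ from \eqref{pmach-defintion-new-metric}, derivatives of $G$ in space and time involve third-order derivatives of $u$ and the Chern curvature/torsion of $\omega$. I expect to spend most of the effort verifying that every such error is pointwise bounded either by a constant times $|\p f|^2_G$ (contributing to $C_1 t|\p f|^2_G$) or by a constant (contributing to $C_2 t$), using the $C^\infty$ bounds on $u$ established in the previous section and Young's inequality to trade Hessian $\times$ gradient terms against a small fraction of the Hessian-square, a fraction which must remain strictly less than $1/2$ so as not to spoil the $\tfrac{1}{2n}$ coefficient.
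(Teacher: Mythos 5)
Your plan is essentially the paper's own argument, merely rephrased: both compute $\p_t H - \sum G^{k\bar l}\bpartial_l\p_k H = Q + t\,LQ$ with $Q = |\p f|^2_G - \beta\p_t f$, extract a negative Hessian--square from the Bochner expansion of $\sum G^{k\bar l}\bpartial_l\p_k(|\p f|^2_G)$, convert the cross term $2\mathfrak{Re}\langle \p f, \p(\sum G^{k\bar l}\bpartial_l\p_k f)\rangle_G$ into a $\frac{2}{t}\mathfrak{Re}\langle\p f,\p H\rangle_G$ contribution via \eqref{pmach-Harnack-lemma-1-2}, absorb the $\p G$, $\p_t G$, curvature and torsion remainders by Young's inequality, and finish with the Cauchy--Schwarz bound $|\p\bpartial f|^2_G \geq \frac{1}{n}(\p_t f - |\p f|^2_G)^2$ as in \eqref{pmach-Harnack-lemma-1-17}. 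The only thing to correct is your attribution of the factor $\frac{1}{2}$ in $\frac{t}{2n}$ to the hypothesis $\beta < 2$: in the actual computation (see \eqref{pmach-Harnack-lemma-1-16}) the Hessian--square survives with coefficient $1 - 4\epsilon$, where $\epsilon$ is the Young parameter used to absorb the four mixed gradient--Hessian error terms, and one simply takes $\epsilon$ small enough that $1 - 4\epsilon \geq \frac{1}{2}$; the constant $\beta$ plays no role in producing that factor, and the $\beta$-dependent pieces (including the $\p^2_t f$ terms) cancel exactly between \eqref{pmach-Harnack-lemma-1-14} and \eqref{pmach-Harnack-lemma-1-15}, so the restriction $1<\beta<2$ is not what you should be leaning on when you do the bookkeeping here.
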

\begin{proof}

By \eqref{pmach-harnack-equality-f-1},
\begin{equation}
\label{pmach-Harnack-lemma-1-1}
\begin{aligned}
	H &= t(|\p f|_G^2 - \beta \p_t f ) \\
	&= t \p_t f - t \sum_{i,j} G^{i\bar j} \bpartial_j\p_i f - t \beta \p_t f \\
	&= -t \sum_{i,j} G^{i\bar j} \bpartial_j\p_i f - t(\beta - 1)\p_t f .
\end{aligned}
\end{equation}
Then, 
\begin{equation}
\label{pmach-Harnack-lemma-1-2}
	\sum_{i,j} G^{i\bar j} \bpartial_j\p_i f = - \frac{1}{t} H - (\beta - 1)\p_t f ,
\end{equation}
and consequently
\begin{equation}
\label{pmach-Harnack-lemma-1-3}
	\frac{\p}{\p t} \left( \sum_{i,j} G^{i\bar j}\bpartial_j\p_i f\right) = \frac{1}{t^2} H - \frac{1}{t} \p_t H - (\beta - 1)\p^2_t f .
\end{equation}

Direct calculation shows that
\begin{equation}
\label{pmach-Harnack-lemma-1-4}
\begin{aligned}
	\p_t H &= |\p f|^2_G - \beta\p_t f + t \p_t \Big(\sum_{i,j} G^{i\bar j} \p_i f\bpartial_j f\Big) -\beta t \p^2_t f \\
	&= |\p f|^2_G - \beta\p_t f + t \sum_{i,j} \p_t G^{i\bar j} \p_i f\bpartial_j f + 2t \mathfrak{Re} \left<\p f , \p \p_t f\right>_G -\beta t \p^2_t f ,
\end{aligned}
\end{equation}
and hence
\begin{equation}
\label{pmach-Harnack-lemma-1-5}
	- 2 \mathfrak{Re} \left<\p f , \p \p_t f\right>_G = - \frac{1}{t} \p_t H + \frac{1}{t}|\p f|^2_G - \frac{\beta}{t}\p_t f +  \sum_{i,j} \p_t G^{i\bar j} \p_i f\bpartial_j f  -\beta  \p^2_t f .
\end{equation}
Also,
\begin{equation}
\label{pmach-Harnack-lemma-1-6}
\begin{aligned}
	\sum_{k,l} G^{k\bar l} \bpartial_l\p_k H &= t \sum_{k,l} G^{k\bar l} \Big( \sum_{i,j} \bpartial_l\p_kG^{i\bar j} \p_i f \bpartial_j f + \sum_{i,j} \p_k G^{i\bar j} \bpartial_l \p_i f \bpartial_j f \\
	& + \sum_{i,j} \p_k G^{i\bar j} \p_i f \bpartial_l\bpartial_j f + \sum_{i,j} \bpartial_l G^{i\bar j} \p_k\p_i f \bpartial_j f + \sum_{i,j} \bpartial_l G^{i\bar j} \p_i f \p_k\bpartial_j f \\
	& + \sum_{i,j} G^{i\bar j} \bpartial_l\p_k\p_i f \bpartial_j f + \sum_{i,j} G^{i\bar j} \p_k\p_i f \bpartial_l\bpartial_j f  + \sum_{i,j} G^{i\bar j} \bpartial_l\p_i f \p_k\bpartial_j f \\
	& + \sum_{i,j} G^{i\bar j} \p_i f \bpartial_l\p_k \bpartial_j f - \beta \bpartial_l\p_k\p_t f\Big)
\end{aligned}
\end{equation}

Now we control all the terms in \eqref{pmach-Harnack-lemma-1-6} by the bounds obtained in the previous sections. We bound the first term by
\begin{equation}
\label{pmach-Harnack-lemma-1-7}
	\left|\sum_{i,j,k,l} G^{k\bar l} \bpartial_l\p_k G^{i\bar j} \p_i f\bpartial_j f \right| \leq C_1 \left|\p f\right|^2_G.
\end{equation}
We bound the second, third, fourth and fifth terms by
\begin{equation}
	\left|\sum_{i,j,k,l} G^{k\bar l} \p_k G^{i\bar j} \bpartial_l \p_i f \bpartial_j f \right| \leq \frac{C_2}{\epsilon} \left|\p f\right|^2_G + \epsilon \left|\p\bpartial f\right|^2_G ,
\end{equation}
\begin{equation}
\label{pmach-Harnack-lemma-1-8}
	\left|\sum_{i,j,k,l} G^{k\bar l}\p_k G^{i\bar j} \p_i f \bpartial_l\bpartial_j f \right| \leq \frac{C_2}{\epsilon} \left|\p f\right|^2_G + \epsilon \left|D^2 f\right|^2_G ,
\end{equation}
\begin{equation}
\label{pmach-Harnack-lemma-1-9}
	\left|\sum_{i,j,k,l} G^{k\bar l} \bpartial_l G^{i\bar j} \p_k\p_i f \bpartial_j f\right| \leq \frac{C_2}{\epsilon} \left|\p f\right|^2_G + \epsilon \left|\p\bpartial f\right|^2_G ,
\end{equation}
and
\begin{equation}
\label{pmach-Harnack-lemma-1-10}
	\left|\sum_{i,j,k,l} G^{k\bar l}\bpartial_l G^{i\bar j} \p_i f \p_k\bpartial_j f\right| \leq \frac{C_2}{\epsilon} \left|\p f\right|^2_G + \epsilon \left|D^2 f\right|^2_G ,
\end{equation}
where
\begin{equation}
\label{pmach-Harnack-lemma-1-11}
	\left|\p\bpartial f\right|^2_G = \sum_{i,j,k,l} G^{k\bar l} G^{i\bar j} \bpartial_l\p_i f \p_k\bpartial_j f, \qquad \left|D^2 f\right|^2_G = \sum_{i,j,k,l} G^{k\bar l} G^{i\bar j} \p_k\p_i f \bpartial_l\bpartial_j f .
\end{equation}
It is obvious that the seventh and eighth terms are
\begin{equation}
\label{pmach-Harnack-lemma-1-12}
	\sum_{i,j,k,l} G^{k\bar l} G^{i\bar j} \p_k\p_i f \bpartial_l\bpartial_j f  = \left|D^2 f\right|^2_G
\end{equation}
and
\begin{equation}
\label{pmach-Harnack-lemma-1-13}
	\sum_{i,j,k,l} G^{k\bar l} G^{i\bar j} \bpartial_l\p_i f \p_k\bpartial_j f  = \left|\p\bpartial f\right|^2_G .
\end{equation}
From \eqref{pmach-Harnack-lemma-1-2} and \eqref{pmach-Harnack-lemma-1-5}, the sixth and nineth terms together give
\begin{equation}
\label{pmach-Harnack-lemma-1-14}
\begin{aligned}
	&\,\sum_{i,j,k,l} G^{k\bar l} G^{i\bar j} \left(\bpartial_l\p_k\p_i f \bpartial_j f + \p_i f \bpartial_l\p_k \bpartial_j f\right) \\
	\geq&\, 2 \mathfrak{Re} \left<\p f , \p \left(\sum_{k,l} G^{k\bar l}\bpartial_l\p_k f\right)\right>_G - \frac{C_3}{\epsilon}\left|\p f\right|^2_G - \epsilon |\p\bpartial f|^2_G \\
	=&\, -\frac{2}{t} \mathfrak{Re} \left<\p f , \p H\right>_G  - \frac{\beta - 1}{t} \p_t H + \frac{\beta - 1}{t}|\p f|^2_G - \frac{\beta(\beta - 1)}{t}\p_t f \\
	&\hspace{2em} +  (\beta - 1) \sum_{i,j} \p_t G^{i\bar j} \p_i f\bpartial_j f  -\beta(\beta - 1)  \p^2_t f - \frac{C_3}{\epsilon}\left|\p f\right|^2_G - \epsilon |\p\bpartial f|^2_G \\
	\geq&\, -\frac{2}{t} \mathfrak{Re} \left<\p f , \p H\right>_G  - \frac{\beta - 1}{t} \p_t H + \frac{\beta - 1}{t}|\p f|^2_G - \frac{\beta(\beta - 1)}{t}\p_t f \\
	&\hspace{2em} - C_4 \left|\p f\right|^2_G -\beta(\beta - 1)  \p^2_t f - \frac{C_3}{\epsilon}\left|\p f\right|^2_G - \epsilon |\p\bpartial f|^2_G .
\end{aligned}
\end{equation}
By  \eqref{pmach-Harnack-lemma-1-3}, the last term gives
\begin{equation}
\label{pmach-Harnack-lemma-1-15}
\begin{aligned}
	- \beta \sum_{k,l} G^{k\bar l}\bpartial_l\p_k\p_t f &= \beta \sum_{k,l} \p_t G^{k\bar l} \bpartial_l\p_k f - \beta \frac{\p}{\p t} \left(\sum_{k,l} G^{k\bar l}\bpartial_l\p_k f\right) \\
	&\geq - \frac{C_5}{\epsilon} - \epsilon \left|\p\bpartial f\right|^2_G - \frac{\beta}{t^2} H + \frac{\beta}{t} \p_t H + \beta (\beta - 1)\p^2_t f .
\end{aligned}
\end{equation}

Substituting\eqref{pmach-Harnack-lemma-1-7} --  \eqref{pmach-Harnack-lemma-1-10}, \eqref{pmach-Harnack-lemma-1-12} -- \eqref{pmach-Harnack-lemma-1-15} into \eqref{pmach-Harnack-lemma-1-6}, we have
\begin{equation}
\label{pmach-Harnack-lemma-1-16}
\begin{aligned}
	\sum_{k,l} G^{k\bar l} \bpartial_l\p_k H 
	&\geq \p_t H -2 \mathfrak{Re} \left<\p f , \p H\right>_G  - \left(\left|\p f\right|^2_G - \beta \p_t f\right) + t (1 - 4\epsilon) \left|\p\bpartial f\right|^2_G\\
	&\hspace{1em} - t \left(C_1 + C_4 + \frac{4 C_2}{\epsilon} + \frac{C_3}{\epsilon}\right)\left|\p f\right|^2_G + t (1 - 2\epsilon) \left|D^2 f\right|^2_G - \frac{t C_5}{\epsilon} \\
	&\geq \p_t H -2 \mathfrak{Re} \left<\p f , \p H\right>_G  - \left(\left|\p f\right|^2_G - \beta \p_t f\right) + \frac{t}{2} \left|\p\bpartial f\right|^2_G\\
	&\hspace{1em} - C t \left|\p f\right|^2_G - Ct
\end{aligned}
\end{equation}
when $\epsilon > 0$ is small enough. Applying the Schwarz inequality,
\begin{equation}
\label{pmach-Harnack-lemma-1-17}
\begin{aligned}
	\left|\p\bpartial f\right|^2_G \geq \frac{\left(\sum_{k,l} G^{k\bar l} \bpartial_l\p_k f\right)^2}{n} = \frac{\left(\p_t f - \left|\p f\right|^2_G\right)^2}{n} .
\end{aligned}
\end{equation}
Therefore,
\begin{equation}
\label{pmach-Harnack-lemma-1-18}
\begin{aligned}
	\sum_{k,l} G^{k\bar l} \bpartial_l\p_k H - \p_t H \geq&\,  -2 \mathfrak{Re} \left<\p f , \p H\right>_G  - \left(\left|\p f\right|^2_G - \beta \p_t f\right) \\
	&\,+ \frac{t}{2n} \left(\p_t f - \left|\p f\right|^2_G\right)^2 - C t \left|\p f\right|^2_G - C t .
\end{aligned}
\end{equation}

\end{proof}

\begin{lemma}
\label{pmach-Harnack-lemma-2}
There exist uniform constants $C_1$ and $C_2$ such that for all $t > 0$,
\begin{equation}
	\left|\p f\right|^2_G - \beta \p_t f \leq C_1 + \frac{C_2}{t}.
\end{equation}

\end{lemma}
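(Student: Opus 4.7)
The plan is to apply the parabolic maximum principle to $H = t(|\p f|_G^2 - \beta\p_t f)$ on the cylinder $M \times [0, T]$ for an arbitrary $T > 0$, and then combine the differential inequality of Lemma~\ref{pmach-Harnack-lemma-1} with a quadratic/Young's inequality argument to bound the maximum of $H$ linearly in $T$. Since $H(\cdot, 0) \equiv 0$, if the max of $H$ on $M\times[0,T]$ is nonpositive then $\Phi := |\p f|_G^2 - \beta\p_t f \leq 0$ and we are done trivially. Otherwise the maximum is attained at some $(p_0, t_0)$ with $t_0 > 0$, and there we have $\p H = 0$, $\sum_{k,l} G^{k\bar l} \bpartial_l \p_k H \leq 0$, and $\p_t H \geq 0$. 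Plugging these into Lemma~\ref{pmach-Harnack-lemma-1} kills the gradient term and yields, at $(p_0, t_0)$,
\[
\frac{t_0}{2n}(a-b)^2 \leq \Phi + C t_0 a + C t_0,
\]
where $a := |\p f|_G^2$ and $b := \p_t f$.

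The rest is algebra. Using $t_0 a - \beta t_0 b = H$, the quantity $t_0(a-b)$ can be rewritten as $\beta^{-1}\bigl((\beta-1)t_0 a + H\bigr)$, so multiplying the displayed inequality by $t_0$ gives
\[
\frac{\bigl((\beta-1)t_0 a + H\bigr)^2}{2n\beta^2} \leq H + C t_0^2 a + C t_0^2.
\]
After expanding the square I would drop the nonnegative cross term $\tfrac{(\beta-1)t_0 a\, H}{n\beta^2}$, and then apply Young's inequality in the form $C t_0^2 a \leq \tfrac{(\beta - 1)^2}{4n\beta^2} t_0^2 a^2 + C' t_0^2$ to absorb the linear-in-$a$ term on the right into half of the $t_0^2 a^2$ term on the left. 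What survives is the scalar quadratic inequality
\[
\frac{H^2}{2n\beta^2} \leq H + C'' t_0^2,
\]
which immediately yields $H(p_0, t_0) \leq 2n\beta^2 + \sqrt{2n\beta^2 C''}\, t_0$. Since $(p_0, t_0)$ is the maximum on the cylinder and $t_0 \leq T$, this gives $H(x, T) \leq 2n\beta^2 + \sqrt{2n\beta^2 C''}\, T$ for every $x \in M$; dividing by $T$ produces the desired bound $\Phi(x,T) \leq C_1 + C_2/T$ with $C_1 := \sqrt{2n\beta^2 C''}$ and $C_2 := 2n\beta^2$, and since $T$ was arbitrary the lemma follows.

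The condition $1 < \beta < 2$ enters critically through the strict positivity of $\beta - 1$: this is what makes the Young's absorption succeed, since the coefficient of $t_0^2 a^2$ appearing from the expansion of the square is $(\beta-1)^2/(2n\beta^2)$, which would vanish if $\beta = 1$ and cause the argument to collapse. The only substantive obstacle is the bookkeeping around this absorption — choosing the Young's parameter so that exactly half of the $t_0^2 a^2$ coefficient is consumed and the leftover $H^2$ term dominates the linear $H$ term. All geometric input (bounds on $G$, curvature, torsion) is already encoded in the constants $C_1, C_2$ of Lemma~\ref{pmach-Harnack-lemma-1}, so no further appeal to the structure of the flow is required here; the argument is purely a max-principle finish in the spirit of Li and Yau.
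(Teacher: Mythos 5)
Your proof is correct, and it takes a genuinely different route through the middle of the argument. Both you and the paper begin identically: fix $T$, apply the parabolic maximum principle to $H$ at an interior maximum $(p_0, t_0)$, use $\p H = 0$, $\p_t H \geq 0$, $\sum G^{k\bar l}\bpartial_l\p_k H \leq 0$ to reduce Lemma~\ref{pmach-Harnack-lemma-1} to the pointwise inequality $\frac{t_0}{2n}(a-b)^2 \leq \Phi + C_1 t_0 a + C_2 t_0$. The divergence is in how this algebraic inequality is exploited. The paper splits into two cases according to the sign of $\p_t f(p_0, t_0)$: when $\p_t f \geq 0$ it weakens $\beta\p_t f$ to $\p_t f$, factors the resulting quadratic, and uses Young on a $|\p f|_G$ term; when $\p_t f < 0$ it runs a bootstrap, first bounding $|\p f|^2_G$ in terms of $-\p_t f$, then $-\p_t f$ in terms of $|\p f|^2_G$, and closing the loop. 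You instead avoid the case split entirely by the substitution $t_0(a-b) = \beta^{-1}\bigl((\beta-1)t_0 a + H\bigr)$ (valid because $t_0 a - \beta t_0 b = H$), expanding, discarding the nonnegative cross term in the nontrivial regime $H>0$, and absorbing $Ct_0^2 a$ via Young against the $(\beta-1)^2 t_0^2 a^2$ term. This collapses everything to the single scalar quadratic $\frac{H^2}{2n\beta^2} \leq H + C''t_0^2$, from which the linear-in-$T$ bound on $H$ follows at once. Your approach is cleaner and more uniform — no sign dichotomy, and the role of $\beta > 1$ is isolated in a single coefficient — whereas the paper's two-case bootstrap is more pedestrian but perhaps easier to verify term-by-term. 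Both ultimately land on the same estimate $H(x,T) \leq \beta(C_1 T + C_2)$ (up to renaming constants) and conclude by dividing by $T$. The only small thing to flag is that your handling of the trivial case should be phrased as: if $\max_{M\times[0,T]} H \leq 0$ then $\Phi(x,T) = H(x,T)/T \leq 0$, which certainly satisfies the desired bound — but this is exactly what you meant, so the argument stands.
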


\begin{proof}
Fix $T > 0$ and suppose that $H$ attains its maximum at $(p , t_0)$ in $M \times (0 , T]$. Then at the point $(p , t_0)$,
\begin{equation}
\label{pmach-Harnack-lemma-2-1}
	0 \geq  - \left(\left|\p f\right|^2_G - \beta \p_t f\right) + \frac{t_0}{2n} \left(\p_t f - \left|\p f\right|^2_G\right)^2 - C_1 t_0 \left|\p f\right|^2_G - C_2 t_0 .
\end{equation}

If $\p_t f (p,t_0) \geq 0$,
\begin{equation}
\label{pmach-Harnack-lemma-2-2}
\begin{aligned}
	0 &\geq  - \left(\left|\p f\right|^2_G - \p_t f\right) + \frac{t_0}{2n} \left(\p_t f - \left|\p f\right|^2_G\right)^2 - C_1 t_0 \left|\p f\right|^2_G - C_2 t_0 \\
	&=  \frac{t_0}{2n}\left(\left|\p f\right|^2_G - \p_t f\right) \left( \p_t f - \left|\p f\right|^2_G- \frac{2n}{t_0}\right) - C_1 t_0 \left|\p f\right|^2_G - C_2 t_0.
\end{aligned}
\end{equation}
Hence,
\begin{equation}
\label{pmach-Harnack-lemma-2-3}
	\left|\p f\right|^2_G - \p_t f \leq C_3 \left|\p f\right|_G + C_4 + \frac{C_5}{t_0} \leq \left(1 - \frac{1}{\beta}\right) \left|\p f\right|^2_G + C_6 + \frac{C_5}{t_0} .
\end{equation}
That is,
\begin{equation}
\label{pmach-Harnack-lemma-2-4}
	\left|\p f\right|^2_G - \beta \p_t f \le C_7 + \frac{C_8}{t_0}.
\end{equation}

If $\p_t f (p,t_0) < 0$,
\begin{equation}
\label{pmach-Harnack-lemma-2-5}
\begin{aligned}
	0 &\geq - \left(\left|\p f\right|^2_G - \beta \p_t f\right) + \frac{t_0}{2n} \left(\p_t f - \left|\p f\right|^2_G\right)^2 - C_1 t_0 \left|\p f\right|^2_G - C_2 t_0 \\
	&\geq - \left|\p f\right|^2_G + \beta \p_t f + \frac{t_0}{2n} \left|\p f\right|^4_G - C_1 t_0 \left|\p f\right|^2_G - C_2 t_0 \\
	&= \left|\p f\right|^2_G \left(-1 - C_1 t_0 + \frac{t_0}{2n} \left|\p f\right|^2_G \right) + \beta \p_t f - C_2 t_0 .
\end{aligned}
\end{equation}
That is,
\begin{equation}
\label{pmach-Harnack-lemma-2-6}
	\left|\p f\right|^2_G \left(-\frac{1}{t_0} - C_1  + \frac{1}{2n} \left|\p f\right|^2_G \right) \leq - \frac{\beta}{t_0} \p_t f + C_2 .
\end{equation}
Hence,
\begin{equation}
\label{pmach-Harnack-lemma-2-7}
	\left|\p f\right|^2_G \leq C_3 + \frac{C_4}{t_0} - \frac{1}{2} \p_t f .
\end{equation}
So
\begin{equation}
\label{pmach-Harnack-lemma-2-8}
\begin{aligned}
	0 &\geq - \left(\left|\p f\right|^2_G - \beta \p_t f\right) + \frac{t_0}{2n} \left(\p_t f - \left|\p f\right|^2_G\right)^2 - C_1 t_0 \left|\p f\right|^2_G - C_2 t_0 \\
	&\geq - \left|\p f\right|^2_G + \beta \p_t f + \frac{t_0}{2n} \left(\p_t f \right)^2 - C_1 t_0 \left|\p f\right|^2_G - C_2 t_0 .
\end{aligned}
\end{equation}
By factoring the terms, we obtain
\begin{equation}
\label{pmach-Harnack-lemma-2-9}
 	\left(-\p_t f\right) \left( - \frac{\beta}{t_0} - \frac{1}{2n} \p_t f \right)  \leq \frac{1}{t_0} \left|\p f\right|^2_G + C_1 \left|\p f\right|^2_G + C_2 .
\end{equation}
Thus
\begin{equation}
\label{pmach-Harnack-lemma-2-10}
\begin{aligned}
	- \p_t f &\leq C_5 + \frac{C_6}{t_0} + \frac{1}{2} \left|\p f\right|^2_G \\
	&\leq C_5 + \frac{C_6}{t_0} + \frac{C_3}{2} + \frac{C_4}{2 t_0} - \frac{1}{4}\p_t f
\end{aligned}
\end{equation}
and hence
\begin{equation}
\label{pmach-Harnack-lemma-2-11}
	- \p_t f \leq C_7 + \frac{C_8}{t_0} .
\end{equation}
Similarly
\begin{equation}
\label{pmach-Harnack-lemma-2-12}
\begin{aligned}
	\left|\p f\right|^2_G &\leq C_3 + \frac{C_4}{t_0} - \frac{1}{2} \p_t f \\
	&\leq C_3 + \frac{C_4}{t_0} + \frac{C_5}{2} + \frac{C_6}{2t_0} + \frac{1}{4} \left|\p f\right|^2_G
\end{aligned}
\end{equation}
and thus
\begin{equation}
\label{pmach-Harnack-lemma-2-13}
	\left|\p f\right|^2_G \leq C_9 + \frac{C_{10}}{t_0} .
\end{equation}
Combining \eqref{pmach-Harnack-lemma-2-11} and \eqref{pmach-Harnack-lemma-2-13},
\begin{equation}
\label{pmach-Harnack-lemma-2-14}
	\left|\p f\right|^2_G - \beta \p_t f \leq C_{11} + \frac{C_{12}}{t_0} .
\end{equation}

Therefore,
\begin{equation}
\label{pmach-Harnack-lemma-2-15}
\begin{aligned}
	H (x,T) &\leq H(p , t_0) \\
	&= t_0 \left(\left|\p f\right|^2_G(p , t_0) - \beta \p_t f (p , t_0)\right) \\
	&\leq \beta \left(C_1 t_0 + C_2\right) \\
	&\leq \beta \left(C_1 T + C_2\right) .
\end{aligned}
\end{equation}

\end{proof}

\begin{lemma}
\label{pmach-Harnack-lemma-3}
$\forall$ $0 < t_1 < t_2$,
\begin{equation}
	\sup_{x\in M} \varphi(x,t_1) \leq \inf_{x\in M}  \varphi(x,t_2) \Big(\frac{t_2}{t_1}\Big)^{C_2} e^{\frac{C_3}{t_2 - t_1} + C_1(t_2 - t_1)}
\end{equation}
where $C_1$, $C_2$ and $C_3$ are uniform constants.
\end{lemma}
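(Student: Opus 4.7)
This is the classical Li--Yau Harnack inequality derived from the pointwise gradient estimate in Lemma~\ref{pmach-Harnack-lemma-2}. The plan is to rearrange that estimate into a differential inequality for $f = \ln\varphi$ along space-time curves, then integrate along a carefully chosen path connecting two arbitrary points $(x_1,t_1)$ and $(x_2,t_2)$.

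First I would rewrite Lemma~\ref{pmach-Harnack-lemma-2} as
\[
\partial_t f \;\geq\; \frac{1}{\beta}|\p f|^2_G - \frac{C_1}{\beta} - \frac{C_2}{\beta s},
\]
valid at every point of $M\times(0,\infty)$. Fix $x_1,x_2\in M$ and $0<t_1<t_2$, and let $\gamma:[t_1,t_2]\to M$ be a minimizing geodesic (with respect to the background metric $g$, which is uniformly equivalent to $G$ by the uniform $C^2$ estimates) joining $x_1$ to $x_2$, parametrized linearly so that $|\dot\gamma|_G \leq CD/(t_2-t_1)$, where $D$ is the $G$-diameter of $M$. Then
\[
\frac{d}{ds} f(\gamma(s),s) \;=\; \partial_t f + df(\dot\gamma) \;\geq\; \tfrac{1}{\beta}|\p f|^2_G - \tfrac{C_1}{\beta} - \tfrac{C_2}{\beta s} - C|\dot\gamma|_G |\p f|_G,
\]
where the last inequality uses the Cauchy--Schwarz relation $|df(\dot\gamma)| \leq C|\dot\gamma|_G|\p f|_G$ between the real directional derivative and the $(1,0)$-gradient norm defined in Section~\ref{pmach-Harnack}.

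Applying AM--GM in the form $C|\dot\gamma|_G|\p f|_G \leq \tfrac{1}{\beta}|\p f|^2_G + \tfrac{C^2\beta}{4}|\dot\gamma|^2_G$ cancels the $|\p f|^2_G$ term and leaves
\[
\frac{d}{ds} f(\gamma(s),s) \;\geq\; -\frac{C^2\beta}{4}|\dot\gamma|^2_G - \frac{C_1}{\beta} - \frac{C_2}{\beta s}.
\]
Integrating over $[t_1,t_2]$ and using $\int_{t_1}^{t_2}|\dot\gamma|^2_G\,ds \leq C^2 D^2/(t_2-t_1)$ yields
\[
f(x_2,t_2) - f(x_1,t_1) \;\geq\; -\frac{C_3}{t_2-t_1} - C_1'(t_2-t_1) - C_2'\ln\frac{t_2}{t_1}.
\]
Exponentiating, then taking the supremum over $x_1$ and the infimum over $x_2$, produces exactly the stated inequality after relabeling constants.

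The only mildly delicate step is establishing the Cauchy--Schwarz bound $|df(\dot\gamma)| \leq C|\dot\gamma|_G|\p f|_G$ with a uniform constant, since $G$ is defined in terms of $F^{i\bar j}$ and need not literally agree with $g$; however the uniform $C^2$ and higher order estimates from Section~\ref{pmach-long} force $G$ and $g$ to be uniformly equivalent, so all geometric quantities (diameter, gradient norm, path length) transfer between the two metrics at the cost of absorbing universal constants. Once this equivalence is noted the rest is a bookkeeping exercise, and the constants $C_1,C_2,C_3$ in the statement can be read off from the exponents produced by the integration step.
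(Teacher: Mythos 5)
Your argument is correct and is essentially the paper's own proof: both integrate the Li--Yau gradient estimate of Lemma~\ref{pmach-Harnack-lemma-2} along a space-time path joining $(x_1,t_1)$ to $(x_2,t_2)$, cancel the $|\p f|^2_G$ term (you by AM--GM, the paper by completing the square, which is the same computation), then exponentiate. The only cosmetic differences are the parametrization of the path (the paper uses $\zeta(s)=(\gamma(s),(1-s)t_2+st_1)$ on $[0,1]$ running backward in time, you parametrize directly by $s\in[t_1,t_2]$) and that your Cauchy--Schwarz step $|df(\dot\gamma)|\leq C|\dot\gamma|_G|\p f|_G$ is in fact an exact $G$-Cauchy--Schwarz with $C=2$, so the appeal to uniform equivalence of $G$ and $g$ is needed only to bound the $G$-diameter and $|\dot\gamma|_G$, which the paper also uses implicitly.
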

\begin{proof}
Let $x, y \in M$, and define $\gamma$ to be the minimal geodesic (with respect ot the initial metric $\omega$) with $\gamma(0) = y$ and $\gamma(1) = x$. Define a path $\zeta:\; [0,1] \rightarrow M\times [t_1,t_2]$ by $\zeta(s) = \left(\gamma(s) , (1-s)t_2 + s t_1\right)$. Then using Lemma~\ref{pmach-Harnack-lemma-2},
\begin{equation}
\begin{aligned}
	\ln \frac{\varphi(x_1, t_1 )}{\varphi(y_2 ,t_2)} &= \int^1_0 \frac{d}{ds} f(\zeta(s))ds \\
	&= \int^1_0 \Big[2\mathfrak{Re} \left<\dot{\gamma}, \p f\right> - \left(t_2 - t_1\right) \p_t f\Big]ds \\
	&\leq \int^1_0 \left[2 |\dot{\gamma}|_G \left|\p f\right|_G + \left(t_2 - t_1\right) \left(C_1 + \frac{C_2}{t} - \frac{1}{\beta} \left|\p f\right|^2_G\right)\right] ds \\
	&\leq \int^1_0 \Bigg[ - \frac{t_2 - t_1}{\beta} \left(|\p f|_G - \frac{\beta |\dot{\gamma}|_G}{t_2 - t_1}\right)^2 + \frac{\beta |\dot{\gamma}|^2_G}{t_2 - t_1} \\
	&\hspace{3em} + C_1(t_2 - t_1) + C_2 \frac{t_2 - t_1}{t} \Bigg]ds \\
	&\leq \int^1_0 \Bigg[\frac{C_3}{t_2 - t_1} + C_1(t_2 - t_1) + C_2 \frac{t_2 - t_1}{t} \Bigg]ds \\
	&= \frac{C_3}{t_2 - t_1} + C_1 (t_2 - t_1) + C_2 \ln \frac{t_2}{t_1} .
\end{aligned}
\end{equation}

\end{proof}

\bigskip

\section{Convergence of the parabolic flow}
\label{pmach-convergence}
\setcounter{equation}{0}
\medskip

With the Harnack inequality, we can show the convergence of $\tilde u$ following Cao~\cite{Cao85}.

Define $\varphi = \frac{\p u}{\p t}$. Then we have
\begin{equation}
	\frac{\p\varphi}{\p t} = \sum_{i,j} G^{i\bar j} \bpartial_j\p_i \varphi.
\end{equation}

Let $m$ be a positive integer and define
\begin{equation}
\begin{aligned}
	\xi_m (x,t) &= \sup_{y\in M} \varphi (y, m - 1) - \varphi (x , m - 1 + t) \geq 0,\\
	\eta_m(x,t)&= \varphi(x, m - 1 + t) - \inf_{y\in M} \varphi(y, m - 1) \geq 0.
\end{aligned}
\end{equation}

These functions satisfy the heat equations
\begin{equation}
\begin{aligned}
	\frac{\p\xi_m}{\p t} &= \sum_{i,j} G^{i\bar j} (m - 1 + t) \bpartial_j\p_i \xi_m ,\\
	\frac{\p\eta_m}{\p t}&= \sum_{i,j} G^{i\bar j} (m - 1 + t) \bpartial_j\p_i \eta_m .
\end{aligned}
\end{equation}

If $\varphi(x , m - 1)$ is not a constant function, then $\xi_m > 0$ for some $x\in M$ at time $t = 0$. By the maximum principle, $\xi_m$ has to be positive for any $x \in M$ when $t > 0$. Similarly, $\eta_m$ is also positive everywhere in $M \times (0,\infty)$. Hence, we can apply Lemma~\ref{pmach-Harnack-lemma-3} with $t_1 = \frac{1}{2}$ and $t_2 = 1$,
\begin{equation}
\begin{aligned}
	&\sup_{M} \varphi(x , m - 1) - \inf_{M} \varphi\left(x , m - \frac{1}{2}\right) \leq C \left(\sup_{M} \varphi (x , m - 1) - \sup_{M} \varphi(x , m) \right), \\
	&\sup_{M} \varphi\left(x , m - \frac{1}{2}\right) - \inf_{M} \varphi(x , m - 1) \leq C \left(\inf_{M} \varphi(x , m) - \inf_{M} \varphi (x , m - 1)\right) .
\end{aligned}
\end{equation}
Define the oscillation $\theta(t) = \sup_{x\in M} \varphi(x,t) - \inf_{x\in M} \varphi(x,t)$. Adding the above inequalities gives us
\begin{equation}
	\theta(m - 1) + \theta\left(m - \frac{1}{2}\right) \leq C\left(\theta(m - 1) - \theta(m) \right) .
\end{equation}
So
\begin{equation}
	\theta(m) \leq \frac{C - 1}{C} \theta(m - 1).
\end{equation}
By induction, we have
\begin{equation}
	\theta (t) \leq C e^{- c_0 t} ,
\end{equation}
where $c_0 = - \ln \frac{C - 1}{C}$. On the other hand, if $\varphi(x , m - 1)$ is constant, this inequality still holds true since $\varphi(x,t)$ is then a constant function.


Since the normalized solution is defined by 
\begin{equation}
	\tilde u = u - \frac{\int_M u \omega^n}{\int \omega^n} ,
\end{equation}
it is obvious that
\begin{equation}
	\int_M\frac{\p \tilde u}{\p t} \omega^n = 0.
\end{equation}
Fixing $(x,t)$ in $M\times[0,\infty)$, there is a point $y\in M$ such that 
\begin{equation}
	\frac{\p\tilde u}{\p t} (y, t) = 0.
\end{equation}
Hence,
\begin{equation}
\begin{aligned}
	\left|\frac{\p\tilde u}{\p t}(x , t)\right| & = \left|\frac{\p\tilde u}{\p t}(x , t) - \frac{\p\tilde u}{\p t}(y , t)\right| \\
	& = \left|\frac{\p u}{\p t}(x , t) - \frac{\p u}{\p t}(y , t)\right| \\
	& \leq C e^{- c_0 t} .
\end{aligned}
\end{equation}
Consider the quantity $Q = \tilde u + \frac{C}{c_0} e^{- c_0 t}$,
\begin{equation}
	\frac{\p Q}{\p t} \leq 0.
\end{equation}
Since $Q$ is bounded and pointwise monotonically decreasing, it tends to a limit as $t\rightarrow \infty$, say, $\tilde u_\infty$. But
\begin{equation}
	\lim_{t\rightarrow \infty} \tilde u = \lim_{t\rightarrow \infty} Q = \tilde u_\infty.
\end{equation}

We show that the convergence of $\tilde u$ to $\tilde u_\infty$ is actually $C^\infty$ by contradiction. Suppose that the convergence is not $C^\infty$, then there exists a time sequence $t_i \rightarrow \infty$ such that for some $\epsilon > 0$ and some integer $k$,
\begin{equation}
||\tilde u(x , t_i) - \tilde u_\infty ||_{C^k} > \epsilon, \quad \forall i.
\end{equation}
Since $\tilde u$ is bounded in $C^\infty$, there exists a subsequence $t_{i_j} \rightarrow \infty$ such that $\tilde u (x , t_{i_j}) \rightarrow \tilde U_\infty$ as $j\rightarrow \infty$ for some smooth function $\tilde U_\infty$. Definitely, $\tilde u_\infty \neq \tilde U_\infty$. But it contradicts the fact that $\tilde u \rightarrow \tilde u_\infty$ pointwise.

Note that $\tilde u$ solves the parabolic flow
\begin{equation}
	\frac{\p \tilde u}{\p t} = \ln  \frac{\chi_{\tilde u}^n }{\chi_{\tilde u}^{n - \a} \wedge \omega^\a} - \ln \psi  - \frac{\int_M u \omega^n}{\int \omega^n} .
\end{equation}
Letting $t\rightarrow \infty$,  $\tilde u_\infty$ solves the equation
\begin{equation}
	\ln  \frac{\chi_{\tilde u}^n }{\chi_{\tilde u}^{n - \a} \wedge \omega^\a} = \ln \psi  + b ,
\end{equation}
where
\begin{equation}
	b = \frac{\int_M \left(\ln  \frac{\chi_{\tilde u}^n }{\chi_{\tilde u}^{n - \a} \wedge \omega^\a} - \ln \psi\right) \omega^n}{\int_M \omega^n} = \lim_{t\rightarrow \infty} \frac{\int_M \left(\ln  \frac{\chi_{u}^n }{\chi_{ u}^{n - \a} \wedge \omega^\a} - \ln \psi\right) \omega^n}{\int_M \omega^n} .
\end{equation}
This completes the proof the convergence.

\bigskip

\section{Revisit to method of continuity}
\label{pmach-revisit}
\setcounter{equation}{0}
\medskip

In this section, we want to avoid using the specific knowledge of $J$-functionals, Gauduchon's theorem~\cite{Ga77} and Buchdahl's result~\cite{Buchdahl99}. So we apply the parabolic result, Theorem~\ref{pmach-int-theorem-convergence-Hermitian}, to carrying out method of continuity.

Let us recall the {\em a priori} esimates for elliptic complex Monge-Amp\`ere type equations in \cite{Sun2013e}.
\begin{theorem}.
\label{pmach-theorem-elliptic-estimate}
Let $(M^n,\omega)$ be a closed Hermitian manifold of complex dimension $n$ and $u$ be a smooth solution of the equaion~\eqref{pmach-elliptic-equation}. Suppose that $\chi \in \mathscr{C}_\a (\psi)$. Then there are uniform $C^\infty$ a priori estimates of $u$.
\end{theorem}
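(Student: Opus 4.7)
The plan is to establish the three estimates in the familiar order: a sharp $C^{2}$ bound, then a $C^{0}$ bound, and finally a bootstrap to $C^{\infty}$. Rewriting the elliptic equation \eqref{pmach-elliptic-equation} in the form $\ln S_\a(\chi_u^{-1}) = \ln C_n^\a - \ln\psi$ makes it literally the time-independent version of \eqref{pmach-main-equation}, so much of the parabolic machinery transfers with $\p_t u$ set to zero and derivatives of $\p_t u$ vanishing.

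For the second-order estimate I would mimic the maximum-principle computation in Proposition~\ref{pmach-proposition-C2}, applied to $e^\phi w$ with the same test function $\phi = -A(u-\ul u) + E_1$ from \eqref{pmach-proposition-C2-test-function}. At an interior maximum the inequality \eqref{pmach-proposition-C2-inequality-max-1} still holds, but the terms $w\p_t\phi + \p_t w$ in \eqref{pmach-proposition-C2-inequality-max-1-1} simply drop out. The dichotomy produced by Theorem~\ref{pmach-theorem-alternative} remains valid: in the ``good'' case the cone condition supplies a term that dominates for $A$ large enough, contradicting $w\gg 1$; in the ``bad'' case one reads off directly $w \le C e^{A(u - \inf_M u)}$. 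This is the exact elliptic analogue of \eqref{pmach-C2-1}.

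The main obstacle is the zeroth-order estimate, since on a general Hermitian manifold one cannot integrate by parts as freely as in the K\"ahler case. I would proceed by the Tosatti--Weinkove strategy: feed the sharp bound $w\le Ce^{A(u-\inf_M u)}$ into a Yau-style Moser iteration for $u - \inf_M u$ with respect to $\omega^n$, using that $\chi_u > 0$ implies $\Delta_\omega u \ge -\tr_\omega\chi$. This controls $\sup_M u - \inf_M u$ by a uniform constant, after which subtracting the $\omega$-average normalises $u$ and gives a pointwise $C^0$ bound. It is here that the cone condition, through the alternative of Theorem~\ref{pmach-theorem-alternative}, plays the essential role, both in feeding the $C^2$ estimate and in closing the Moser iteration.

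Once $C^0$ and $C^2$ are in hand, the Hessian of $u$ is uniformly bounded and the eigenvalues of $\chi_u$ lie in a fixed compact subset of the admissible cone. Since $\ln S_\a(\chi_u^{-1})$ is concave on this cone, the Evans--Krylov theorem \cite{Evans82, Krylov82} yields a uniform $C^{2,\gamma}$ estimate, and the standard Schauder bootstrap applied to successive derivatives of the equation upgrades this to uniform $C^\infty$ bounds, completing the proof.
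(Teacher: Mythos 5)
The paper does not actually prove this theorem — it is quoted from \cite{Sun2013e} to feed the continuity method of Section~\ref{pmach-revisit} — so there is no in-paper proof to compare against literally, but your reconstruction is correct and is precisely the elliptic specialization of the machinery the paper itself develops: the maximum-principle computation of Proposition~\ref{pmach-proposition-C2} with the $\partial_t$ terms dropped, the Cherrier/Tosatti--Weinkove argument (cited from \cite{TWv10a} in Section~\ref{pmach-long}) converting $w\le Ce^{A(u-\inf_M u)}$ together with $\Delta_\omega u\ge -\tr_\omega\chi$ into a uniform oscillation bound, and Evans--Krylov plus Schauder bootstrapping. Two small remarks. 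First, you are right that the parabolic terms simply disappear, but it is worth noting that the elliptic equation forces $S_\alpha(\chi_u^{-1})=C_n^\alpha/\psi$ pointwise, so $\ln\bigl(C_n^\alpha/(\psi S_\alpha)\bigr)\equiv 0$ and the ``good'' bracket in the dichotomy of Theorem~\ref{pmach-theorem-alternative} reduces directly to $\theta\alpha>0$ without even needing the $\ln x\le x-1$ step that the parabolic case uses to absorb $\partial_t u$. Second, the cone condition enters only through Theorem~\ref{pmach-theorem-alternative} to close the $C^2$ estimate; the subsequent integral/Moser iteration uses only the exponential $C^2$ bound and $\chi$-plurisubharmonicity, not the cone condition itself, so the phrase ``closing the Moser iteration'' slightly overstates where that hypothesis is used.
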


First, since $\chi \in \mathscr{C}_\a (\psi)$, there must be a function $\ul u$ satisfying 
\begin{equation}
	\chi_{\ul u} = \chi + \frac{\sqrt{-1}}{2} \p\bpartial\ul u > 0
\end{equation}
and 
\begin{equation}
	n \chi^{n - 1}_{\ul u} > (n - \a) \psi \chi^{n - \a - 1}_{\ul u} \wedge \omega^\a .
\end{equation}
As in \cite{Sun2013e}, without loss of generality, we can assume $\ul u$ is smooth.

Define $\ul \varphi$ by
\begin{equation}
	\chi^n_{\ul u} = \ul\varphi \chi^{n - \a}_{\ul u} \wedge \omega^\a .
\end{equation}
It follows from the monotonicity of $\frac{S_n}{S_{n - \a}}$ that
\begin{equation}
	n \chi^{n - 1}_{\ul u} > (n - \a) \ul\varphi \chi^{n - \a - 1}_{\ul u} \wedge \omega^\a .
\end{equation}
Consequently, 
\begin{equation}
	n \chi^{n - 1}_{\ul u} > (n - \a) (\max\{\psi, \ul \varphi\} + 2 \delta)\chi^{n - \a - 1}_{\ul u} \wedge \omega^\a .
\end{equation}
for sufficiently small $\delta > 0$. By approximation, we can find a smooth function $\psi_0$ satisfying 
\begin{equation}
	\max\{\psi , \ul\varphi\} \leq \psi_0 \leq \max\{\psi , \ul\varphi\} + \delta.
\end{equation}
 So we have $\chi_{\ul u} \in \mathscr{C}_\a (\psi_0)$ and
\begin{equation}
\frac{\chi^n_{\ul u}}{\chi^{n - \a}_{\ul u} \wedge \omega^\a} \leq \psi_0.
\end{equation}
By Theorem~\ref{pmach-int-theorem-convergence-Hermitian} and maximum principle, there exists an admissible solution $u_0$ of
\begin{equation}
    \left(\chi + \frac{\sqrt{-1}}{2}\p\bpartial u \right)^n = \psi_0 e^{b_0}\left(\chi + \frac{\sqrt{-1}}{2}\p\bpartial u\right)^{n - \a} \wedge \omega^\a , 
\end{equation}
for some $b_0 \leq 0$.

Second, we start the method of continuity from $\chi_{u_0}$ and consider the family of equations
\begin{equation}
\label{pmach-equation-continuity-method-1}
    \left(\chi + \frac{\sqrt{-1}}{2}\p\bpartial u_s\right)^n = \psi^s \psi_0^{1-s} e^{b_s} \left(\chi + \frac{\sqrt{-1}}{2}\p\bpartial u_s\right)^{n - \a} \wedge \omega^\a , \quad\text{ for } s\in [0,1].
\end{equation}
Note that $b_0$ has been found out in the first stage.

Integrating equation \eqref{pmach-equation-continuity-method-1},
\begin{equation}
	\int_M \chi^{n} = \int_M \psi^s \psi_0^{1-s} e^{b_s} \left(\chi + \frac{\sqrt{-1}}{2}\p\bpartial u_s\right)^{n - \a} \wedge \omega^\a \geq c e^{b_s} \int_M \chi^{n - \a} \wedge \omega^\a ,
\end{equation}
which implies
\begin{equation}
	b_s \leq 0.
\end{equation}
So we have
\begin{equation}
	 \psi^s \psi_0^{1-s} e^{b_s} \leq  \psi^s \psi_0^{1-s} \leq \psi_0.
\end{equation}
Therefore, there are uniform $C^\infty $ estimates of $u_s$, as a consequnce of Theorem~\ref{pmach-theorem-elliptic-estimate}.

We consider the set
\begin{equation}
\label{pmach-continuity-method-definition-S}
    \mathcal{S} := \{s'\in[0,1]\;|\; \exists \; u_s \in C^{2,\a}(M) \text{ and } b_s \text{ solving } \eqref{pmach-equation-continuity-method-1} \text{ for } s\in[0,s']\}.
\end{equation}
Since we obtained $b_0$ in the first stage, $0 \in \mathcal{S}$ and hence $\mathcal{S}$ is not empty. It suffices to show that $\mathcal{S}$ is both open and closed in $[0,1]$.

From equation \eqref{pmach-equation-continuity-method-1}, we have 
\(
 \psi^s \psi_0^{1 - s} e^{b_s} \geq \psi_0 e^{b_0}
\)
when $u_s$ achieves its minimum. 
So
\begin{equation}
	0 \geq b_s \geq \inf_M \left(\ln \psi_0 - \ln \psi\right) + b_0.
\end{equation}
The closedness of $\mathcal{S}$ follows from the uniform bound for $b_s$ and uniform $C^\infty$ estimates of $u_s$.

Now we show that $\mathcal{S}$ is open. Assuming that $\hat s \in \mathcal{S}$, we need to show that there exists small $\epsilon > 0$ such that $s\in\mathcal{S}$ for any $s\in[\hat s , \hat s + \epsilon)$.
\begin{equation}
    \left(\chi + \frac{\sqrt{-1}}{2}\p\bpartial u_{\hat s}\right)^n = \psi^{\hat s} \psi_0^{1-{\hat s}} e^{b_{\hat s}} \left(\chi + \frac{\sqrt{-1}}{2}\p\bpartial u_{\hat s}\right)^{n - \a} \wedge \omega^\a .
\end{equation}
When $\psi\equiv\psi_0$, the openess is obvious. If $\psi\not\equiv\psi_0$, we choose a positive constant $\kappa > 1$ satisfying
\begin{equation}
	\kappa \psi_0 \leq \psi_0 + \delta.
\end{equation}
Thus, we have
\begin{equation}
	\kappa \psi^s \psi^{1 - s}_0 \leq \psi_0 + \delta 
\end{equation}
and hence
\begin{equation}
\label{pmach-method-of-continuity-flow-condition-1}
	n \chi^{n - 1}_{\ul u} > (n - \a) \kappa \psi^s \psi^{1 - s}_0 \chi^{n - \a - 1}_{\ul u} \wedge \omega^\a .
\end{equation}
Also, 
\begin{equation}
\label{pmach-method-of-continuity-flow-condition-2}
\begin{aligned}
	\kappa \psi^s \psi^{1 - s}_0 &\geq \kappa e^{- b_{\hat s}} \left(\frac{\psi}{\psi_0}\right)^{s - \hat s} \psi^{\hat s} \psi^{1 - \hat s}_0 e^{b_{\hat s}} \\
	&\geq \kappa \left(\frac{\psi}{\psi_0}\right)^{s - \hat s} \psi^{\hat s} \psi^{1 - \hat s}_0 e^{b_{\hat s}} \\
	&\geq \kappa \left(\inf_M\frac{\psi}{\psi_0}\right)^{s - \hat s} \psi^{\hat s} \psi^{1 - \hat s}_0 e^{b_{\hat s}} \\
	&\geq \psi^{\hat s} \psi^{1 - \hat s}_0 e^{b_{\hat s}} \\
	&= \frac{\chi^n_{u_{\hat s}}}{ \chi^{n - \a}_{u_{\hat s}} \wedge \omega^\a }
\end{aligned}
\end{equation}
whenever
\begin{equation}
	s - \hat s \leq - \frac{\ln \kappa}{\ln \inf_M \frac{\psi}{\psi_0}}.
\end{equation}
Let  $\epsilon = - \frac{\ln \kappa}{ \inf_M (\ln\psi - \ln\psi_0)}$ and consider the parabolic equation
\begin{equation}
\frac{\p u}{\p t} = \ln  \frac{\chi_u^n }{\chi_u^{n - \a} \wedge \omega^\a} - \ln \left(\kappa \psi^s \psi^{1 - s}_0\right) ,
\end{equation}
with $u(x,t) = u_{\hat s}(x)$. Therefore, from \eqref{pmach-method-of-continuity-flow-condition-1}, \eqref{pmach-method-of-continuity-flow-condition-2} and Theorem~\ref{pmach-int-theorem-convergence-Hermitian}, we have a pair $(u_s , b_s)$ solving equation~\eqref{pmach-equation-continuity-method-1} for $s\in[\hat s , \hat s + \epsilon)$, and hence $\mathcal{S}$ is open.

\bigskip

\end{document}